\newcommand{\trace}{\mathop\mathrm{tr}}
\newcommand{\includesvggraphics}[2][\textwidth]{%
  \filename@parse{#2}%
  \includesvg[inkscapepath=svg-inkscape/\filename@area,#1]{#2}%
}
\theoremstyle{plain}
\newtheorem{theorem}{Theorem}
\newtheorem{lemma}{Lemma}
\newtheorem{corollary}{Corollary}
\newtheorem{remark}{Remark}
\newcommand\norm[1]{\lVert#1\rVert}
\newcommand{\R}{\mathbb{R}}
\newcommand{\mc}[1]{\mathcal#1}
\newcommand{\dkl}[2]{D_{\rm KL}\left(#1 \,\|\, #2\right)}
\title{Posterior sampling with Adaptive Gaussian Processes in Bayesian parameter identification}
\author{Paolo Villani, Daniel Andrés-Arcones, Jörg Unger, and Martin Weiser}
\begin{document}

\maketitle

\begin{abstract}
Posterior sampling by Monte Carlo methods provides a more comprehensive solution approach to inverse problems than computing point estimates such as the maximum posterior using optimization methods, at the expense of usually requiring many more evaluations of the forward model. Replacing computationally expensive forward models by fast surrogate models is an attractive option. However, computing the simulated training data for building a sufficiently accurate surrogate model can be computationally expensive in itself, leading to the design of computer experiments problem of finding evaluation points and accuracies such that the highest  accuracy is obtained given a fixed computational budget. Here, we consider a fully adaptive greedy approach to this problem. Using Gaussian process regression as surrogate, samples are drawn from the available posterior approximation while designs are incrementally defined by solving a sequence of optimization problems for evaluation accuracy and positions. The selection of training designs is tailored towards representing the posterior to be sampled as good as possible, while the interleaved sampling steps discard old inaccurate samples in favor of new, more accurate ones.
Numerical results show a significant reduction of the computational effort compared to just position-adaptive and static designs.
\end{abstract}

\section{Introduction}

The response of many parameter-dependent models is described by a function $y$ mapping parameters $p$ to observable quantities $y(p)$. The inverse problem of inferring the posterior probability distribution $\pi_{\rm post}$ from measurements $y^m$ is frequently encountered in various fields including physics, engineering, finance, and biology~\cite{KaipioSomersalo2005}.  In contrast to point estimates $p_*$, e.g., the maximum likelihood estimate $\mathrm{arg\,min}_p \|y(p)-y^m\|$, which are often computed by optimization methods~\cite{EngelHankeNeubauer1996}, a comprehensive view on the likely parameters and their uncertainty is provided by sampling the posterior probability distribution by Markov Chain Monte Carlo (MCMC) methods such as Metropolis-Hastings~\cite{Hastings1970} and Ensemble Sampling~\cite{GoodmanWeare}. The drawback is the large number of forward evaluations required for a faithful representation of the posterior density or an accurate approximation of statistical moments such as mean or variance, usually exceeding several thousands of evaluations.

Often, evaluating $y$ involves a complex numerical procedure, such as the numerical solution of a partial differential equation. Posterior sampling for parameter estimation and uncertainty quantification is then computationally expensive and often inhibitive. To reduce the number of model evaluations, different approaches have been developed. Delayed Acceptance MCMC~\cite{ChristenFox2005} evaluates the forward model only on points which are likely to be accepted. An alternative is provided by fast surrogate models approximating $y$ and utilized as a replacement for the forward model when sampling the posterior, in particular if both parameters $p$ and measurements $y^m$ are low-dimensional. Various types of surrogates are employed, including polynomials, sparse grids, tensor trains, artificial neural networks, and Gaussian process regression (GPR)~\cite{Schneider,Zaytsev}, on which we focus here. 

The construction of a sufficiently accurate surrogate model can require itself a large number of model evaluations $y(p_i)$. To address this issue, the choice of both the number and the position of evaluation point must be carefully considered. Various strategies to select training points have been proposed, in particular for analytically well-understood GPR~\cite{RasmussenWilliams2006}. 

When measurements $y^m$ are not yet available, the training of the surrogate must rely on information on the parameter space and the model only. Space-filling designs seek the training points to be occupying most of the parameter space and can be selected a priori~\cite{Giunta,Queipo} or adaptively~\cite{Crombecq,Joseph,Lehmensiek,Sugiyama}. Active learning relies on pointwise estimates of the surrogate approximation error for guiding the selection process, usually including the parameter point into the training set, that maximizes some acquisition or utility function~\cite{Hennig,Mockus,Wu}. 

If instead measurement data $y^m$ is available at training time, a goal-oriented strategy can additionally improve the efficiency of the training design construction~\cite{Dinkel2024,SinsbeckNowak2017,WangBroccardo2020}, by aiming not at global accuracy on the whole parameter space, but at a faithful representation of the posterior distribution. Such an approach can reduce the computational effort for building the surrogate model significantly, and train the surrogate while samples from the posterior distribution are drawn. These posterior-based surrogating strategies can guarantee asymptotic convergence to the posterior in the Hellinger distance~\cite{HelinStuartTeckentrupZygalakis2023}.
In addition, a reduced complexity of the surrogate in terms of the number of training points leads to a faster surrogate evaluation. 

Whenever numerical procedures like finite element (FE) solvers are employed to evaluate the forward model, the evaluations $y(p_i)$ will not be exact due to discretization and truncation errors. With adaptive mesh refinement techniques, the discretization error, and, in turn, the computational effort, can be chosen arbitrarily in a wide range by specifying an evaluation tolerance. While uniformly high accuracy can be used, this incurs a high computational effort. The trade-off between evaluation accuracy and computational cost is often not considered, assuming an accurate-and-costly numerical discretization of the model as ground truth. Significant efficiency gains by joint optimization of evaluation position and tolerance given a limited computational budget have been achieved for pure GPR~\cite{SemlerWeiser2023} as well as for gradient enhanced GPR~\cite{SemlerWeiser2024} in the context of maximum posterior estimates and a completely offline surrogate construction.

In this paper, we provide a detailed investigation of a recently proposed surrogate-based sampling strategy for Bayesian inverse problems with interleaved goal-oriented surrogate model training~\cite{VillaniUngerWeiser2024}. While samples are accumulated, the surrogate model is improved by including new training points and decreasing the tolerance of existing ones. These choices are quantified by minimization of surrogate error weighted by the current posterior representation. We extend on~\cite{VillaniUngerWeiser2024} by giving a comprehensive presentation of the complete adaptive strategy, considering different error metrics for measuring the quality of the posterior approximation, and evaluating the strategy in more detail and on more test problems, 

The remainder of the paper is structured as follows: In Sec.~\ref{sec:IP} we briefly recall Bayesian inversion introducing notation. GPR surrogate models and surrogate-based inverse problems are discussed in Sec.~\ref{sec:GPinversion}. The different error metrics and the sampling interleaved with adaptive surrogate construction are worked out in Sec.~\ref{sec:strategy}. Numerical experiments are presented in Sec.~\ref{sec:experiments}.


\section{Bayesian inversion via Posterior Sampling}
\label{sec:IP}

In this section, we will define the setting of our problem and present some key techniques.

\subsection{Forward model} \label{sec:fedef}

We start by considering a forward model
\[ y : \Omega \subset \R^d  \to \R^m, \quad p \mapsto y(p), \]
which to any given parameter value $p$ in a domain $\Omega$ assigns a value in the measurement space $\R^m$. We assume the forward model to be approximated by a computational model $y_\tau:\Omega\to \R^m$ parameterized by a tolerance $\tau > 0$, such that $\|y(p)-y_\tau(p)\|\le \tau$ is satisfied, with $\|\cdot\|$ denoting an arbitrary, possibly problem-dependent, norm on $\R^m$.

\subsection{Bayesian inverse problem} \label{sec:invprobdef}

We suppose now to have some measurements $y^m$ taken from a real world phenomenon that we want to represent by the forward model $y(p_*)$ for the true but unknown parameter $p_*$. We assume additive normally distributed measurement errors $\eta$, such that
\[
y^m = y(p_*) + \eta, \quad\text{with } \eta \sim \mathcal{N}( {0}, \Sigma^2)
\]
holds for zero error mean and covariance $\Sigma$.  
Now, due to the preceding assumptions we obtain the likelihood distribution
\[
\pi (y^m \mid p) \sim \mathcal N (y(p), \Sigma^2 ),
\] 
i.e.\ the measurements given the parameter value are normally distributed. Neglecting the normalization constant not needed for MCMC sampling, we obtain the computable likelihood
\begin{equation}\label{eq:likelihood}
L(p) = \exp \Big (-\frac{1}{2}\norm{y^m - y(p)}_{\Sigma^{-2}}^2 \Big ) \propto \pi (y^m \mid p),
\end{equation} 
where, for any SPD matrix $M$, $\norm{x}_M = x^T M x $ is the norm induced by $M$.
Note that evaluating the likelihood requires evaluating the forward model $y$

Adopting a Bayesian point of view, we treat unknown quantities as random variables: in this perspective, the parameter $p$ is a random vector with a prior distribution $\pi(p)$ representing the information we have before measurements are taken.

By the theorem of Bayes, we can derive the posterior distribution $\pi$, which can be interpreted as the solution of the inverse problem. Its density is given by
\begin{equation} \label{eq:bip}
    \pi(p \mid y^m ) =
    \frac{\pi(p) \ \pi(y^m \mid p )}{\pi(y^m)} \propto \pi(p) \ L(p).
\end{equation}
 The normalization constant or evidence $\pi(y^m)^{-1}$ is in general computationally unavailable, as it requires an integration over the often high-dimensional parameter space $\Omega$, but is fortunately not always needed.

\subsection{Posterior sampling}

We aim at solving the inverse problem
by sampling the posterior distribution. Representing $\pi(p\mid y^m)$ by a sufficiently large sampling allows estimating statistical moments by, e.g., the sample mean and sample covariance, and thus provides a direct uncertainty quantification.

To sample the posterior, we utilize Markov Chain Monte Carlo (MCMC) methods, a class of sampling techniques commonly employed to sample from distributions. The resulting samples are not independent one from the other, but in sufficient number provide a reliable approximation of the posterior distribution.

MCMC methods have the advantage of not needing normalizing constants for the target distribution, i.e.\ the distribution needs to be known only up to a constant such as in~\eqref{eq:bip}. 
However, at each step they need to evaluate in a new parameter point the product between prior and likelihood, i.e. the unnormalized posterior, in order to compute a rejection probability. By this, sampling methods need a considerable number of forward model evaluations, usually far greater than the one needed for solving the optimization problems of maximum posterior or maximum likelihood point estimates.

Given samples $\{ p_1, \dots, p_N\}$ from the posterior, we can approximate integrals weighted by the posterior by summing contributions from the samples:
\begin{equation} \label{eq:MC-integration}
\int f(p) \ \pi(p \mid y^m) \ dp \approx \frac{1}{N} \sum_{i=1}^N f(p_i).
\end{equation}
This is a case of Monte Carlo integration, and provides a viable approach for the numerical estimation of integrals which involve a distribution only known up to a constant. It is particularly efficient in the case where multiple integrals which involve the same distribution are to be computed and we will use it in such a case.

\section{Inversion with Gaussian Process Surrogates}
\label{sec:GPinversion}

The inverse problem formulated in Section \ref{sec:invprobdef} could in theory be solved using the mathematical model $y$ to evaluate the likelihood. In practice, one usually can only evaluate an approximate model $y_\tau$ for some tolerance $\tau>0$, at some computational cost $W(\tau)$. Such costs often make solving the problem accurately undesirable or even infeasible, especially with costly techniques such as MCMC. A common solution to this problem is introducing a cheap surrogate model. A surrogate model needs to be both sufficiently accurate and relatively inexpensive to evaluate when compared with the full model. In this paper, we consider Gaussian Process Regression as surrogate models.

\subsection{Gaussian Process Regression}\label{sec:GPR}

Gaussian process Regression (GPR) is an approximation technique that can efficiently learn large classes of functions~\cite{RasmussenWilliams2006}, providing a . 

We consider an experimental design $\mc D = \left( (p_i,\tau_i)\right)_{i=1,\dots,s}$, with evaluation parameter positions $p_i$  and evaluation tolerances $\tau_i$. We then consider inexact evaluation data corresponding to $\mc D$, $Y_T =(y_i)_{i=1,\dots,s}$ with $y_i \approx y(p_i)$. We will assume the actual evaluation errors $e_i = y_i-y(p_i)$ to be independently and normally distributed with mean zero and standard deviation $\tau_i$. In case of multi-dimensional model output for $m>1$, we also assume the error components to be independently and identically distributed.

From a Bayesian perspective, we look for a surrogate model $y_{\mathcal{D}} \approx y$ that is constructed or identified from the design $\mc D$. Under the assumptions regarding the training data, the regression likelihood of evaluating $Y_T$ if the true response is $Y=(y(p_i))_{i=1,\dots,s}$ is 
\[
\pi_{\rm like}(Y_T \mid Y) \propto \exp\left(-\frac{1}{2}\|Y_T - Y\|^2_{\mathfrak{T}^{-2}}\right)
\]
with the diagonal covariance $\mathfrak{T} ^2 = I_m \otimes \mathrm{diag}(\tau_1^2,\dots,\tau_s^2)  $.

\begin{remark}
    The assumption of independently and normally distributed evaluation errors is for sure not satisfied in the case of usual discretization errors from, e.g., finite element approximations, but allows a straightforward analytical treatment and derivation of adaptive algorithms. 
    
    The assumption of the error components being identically and independently distributed is in general also wrong. Instead of the $m$-dimensional unit matrix $I_m$, an arbitrary correlation matrix could have been used. 
\end{remark}

The regression prior regularity assumption on the forward model $y$ is, that it is a realization of a Gaussian process. A Gaussian process  $\mc G$ on $\Omega$ is a stochastic process $\{\mc{G}(p)\}_{p \in \Omega}$ such that, for any  $p_1, \dots, p_k \in \Omega$, $( \mc G (p_1), \dots, \mc G (p_k) ) $ has a multivariate normal distribution. A Gaussian process is completely determined by its mean function $\mu: \Omega \rightarrow \R^m$ and its covariance function or kernel $k: \Omega \times \Omega \rightarrow \R^{m\times m}$, and thus denoted as $GP (\mu, k )$. For a family of values $Y = (y(p_1),\dots,y(p_s),y(p'))\in\Omega^{s+1}$, the regression prior probability is therefore 
\[
    \pi_{\rm prior}(Y) \propto \exp\left(-\frac{1}{2}\|Y-M\|_{K^{-1}}^2\right)
\]
with mean $M=(\mu(p_1),\dots,\mu(p_s),\mu(p'))$ and the covariance 
\[
    K = \begin{bmatrix} k(p_1,p_1) & \dots & k(p_1,p') \\
                        \vdots               & \ddots & \vdots \\
                        k(p',p_1) & \dots & k(p',p')
    \end{bmatrix}.
\]
By Bayes' theorem, the posterior probability of values $Y$ given that inexact evaluations $Y_T$ are available, is then
\[
    \pi_{\rm post}(Y | Y_T) = \frac{\pi_{\rm like}(Y_T\mid Y) \pi_{\rm prior}(Y)}{\pi(Y_i)} \propto \exp\left( -\frac{1}{2}\|Y-\bar Y\|_{\Gamma^{-1}}^2\right)
\]
with covariance $\Gamma = \left(K^{-1}+\mathfrak{T}^{-2}\right)^{-1}$ and mean $\bar Y = \Gamma(K^{-1}M +\mathfrak{T}^{-2} Y_\tau)$. 

Regression is performed by including a parameter $p_{n+1}$ into the set for which no evaluation is known, i.e. formally $\tau_{s+1} = \infty$. Then, its marginal posterior distribution is 
\begin{equation}
    y_{\mc D}(p_{s+1}) \sim \mc N(\bar y(p_{s+1}),\Gamma(p_{n+1})),
\end{equation} 
with $\bar y(p_{s+1}) = \bar Y_{s+1}$, $\Gamma(p_{s+1}) = \Gamma_{s+1,s+1}$, and indices interpreted as block ranges in case $m>1$. Note that $\bar y (p)$ is linear in $Y$ and one can easily compute its gradient by computing $\nabla_\theta k$. For a detailed exposition we refer to~\cite[chap. 2.2]{RasmussenWilliams2006}.

The choice of an appropriate kernel is crucial for the quality of a GP surrogate, since it determines key features of the GP, such as the smoothness of its realizations. In fact, the choice of the covariance can be seen as the assumption that the target function lives in a certain Hilbert space, namely the Reproducing Kernel Hilbert Space corresponding to the kernel $k$~\cite[chap. 4]{ChristmannSteinwart2008}. There is a vast choice of kernels: an overview for the single-output case, i.e. $m=1$, can be found in~\cite[chap. 2]{Duvenaud} and~\cite[chap. 4]{RasmussenWilliams2006}, while for the multiple-output case a review is offered by~\cite{AlvarezRosascoLawrence2012}; in the context of surrogates for linear PDE models,~\cite{BaiTeckentrupZygalakis2024} proposes a kernel construction procedure for PDE-informed priors.  

Kernels are usually assumed to be stationary, i.e. $k(p, p') = g(\norm{p-p'}_k)$  for some norm $\norm{\cdot}_k$ and function  $g:\R \to \R^{m\times m}$. In the multiple output case it is common to consider separable kernels $k(p, p') = \kappa(p, p') K_c$ for a scalar kernel $\kappa: \Omega \times \Omega \rightarrow \R$  and symmetric positive definite $K_c \in R^{m\times m}$, where $\kappa$ models the correlation between different parameters and $K_c$ models the correlation between different components.

An elementary and frequent choice is the separable stationary Gauss kernel
\begin{equation}\label{eq:gauss-kernel}
     k(p, p') =  \exp ( - \norm{p - p'}^2_{L} ) \ K_c,
\end{equation}
which yields $C^\infty$ realizations. Note that the kernel depends on the component covariance matrix $K_c$ and the length scale matrix $ L \in \R^{d \times d} $. These are known as the GP's hyperparameters and their adaptation to the data to be approximated is a crucial step in GP regression~\cite[chap. 2.3, 5]{RasmussenWilliams2006}.

\subsection{Likelihood representation} \label{sec:likelihoods}

We will postpone the question of how to select the design $\mc D$ of training points and their tolerances to Sec.~\ref{sec:doe} and for now assume it to be given, defining a normal predictive distribution $y_{\mc D}$ of mean $\bar y $ and covariance $\Gamma$. 

that is chosen as surrogate for the forward model $y$. Turning towards the original inverse problem formulated in terms of the posterior distribution~\eqref{eq:bip}, the exact model $y$ needs to be replaced by the surrogate $y_{\mc D}$ in the likelihood $L$ from~\eqref{eq:likelihood}.

The straightforward choice of interpreting the conditional mean $\bar y$ as a deterministic response surface to replace the forward model $y$ in the likelihood~\eqref{eq:likelihood} yields 
\[
L_{\text{plug-in}}(y^m \mid p, \bar y) = \exp \Big (-\frac{1}{2}\norm{y^m -\bar y(p)}_{\Sigma^{-2}}^2 \Big ).
\]

However, a Gaussian process $y_{\mc D}$ as a stochastic surrogate model does not only provide its mean, but also the whole distribution. The likelihood of observing $y^m$ thus depends not only on $\bar y$, but also on the variance. The full likelihood is the marginal distribution of the joint distribution of $y^m$ and $y_{\mc D}(p)$, i.e. the sum of measurement error and surrogate predictive distributions:
\begin{align}
    L_{\mc D}(p) := \pi_\mc D (y^m \mid p, y_{\mc D})
    &=  \mathbb E_{y_{\mc D}(p)} \Big [ \exp \Big (-\frac{1}{2}\norm{y^m -y_{\mc D}(p)}_{\Sigma^{-2}}^2 \Big ) \Big ] \notag \\
    &= (2\pi)^{-\frac{m}{2}} \text{det}\big ( \Sigma^{2} +\Gamma(p) \big )^{-\frac{1}{2}} \exp \Big( -\frac{1}{2}\norm{y^m - \bar y (p)}_{(\Sigma^{2} + \Gamma(p))^{-1}}^2 \Big ) . \label{eq:full-likelihood}
\end{align}
Both choices can be justified, and interpreted as $L^1$ loss and $L^2$ loss, respectively~\cite{ JarvenpaaGutmannVehtariMarttine2021,SinsbeckNowak2017}. Here, we use the fully stochastic likelihood~\eqref{eq:full-likelihood}, since including the surrogate variance increases the likelihood variance and avoids an overconfident posterior.
The choice of such a likelihood comes at the price of having a more complicated likelihood function, since now the covariance depends on $p$. For $\Gamma(p) > \Sigma^2$, this can introduce more local minima and non-convex behavior, which can, however, be expected to be less of a problem for sampling based approaches than for optimization methods.

Finally, the fully Bayesian posterior density, informed about the surrogate predictive variance, is
\begin{equation} \label{eq:posterior}
\pi(p \mid  y^m,\mc D) = \frac{ \pi(p) \ L_\mc D(p)}{\pi_{\mc D}(y^m)}.
\end{equation}

\section{Surrogate-based solution of the Inverse Problem.} \label{sec:strategy}

In practice, sampling the true posterior $\pi(p\mid y^m)$ as given in~\eqref{eq:bip} is not possible and the surrogate-based posterior $\pi(p \mid  y^m,\mc D)$ defined in~\eqref{eq:posterior} provides the currently available approximation. The selection of a good training design is therefore crucial for an adequate surrogate accuracy and a faithful posterior representation.

The local surrogate accuracy can be controlled by the evaluation design $\mc D$, including evaluation point positioning $p_i$ and evaluation tolerances $\tau_i$, and affects the overall accuracy $E(\mc D)$ as well as the computational effort $W(\mc D)$. Following~\cite{SemlerWeiser2023,VillaniUngerWeiser2024}, we aim at selecting the design by minimizing the posterior approximation error given a certain computational budget:
\begin{equation} \label{eq:doe}
    \min_{\mc D} E(\mc D) \quad \text{subject to} \quad W(\mc D) \le W.
\end{equation}
For that, we define subsequently both an error model $E(\mc D)$ and a work model $W(\mc D)$.

\subsection{Error models} \label{sec:error-model}

Here and in the following, we will distinguish between the error metric $\mc E (\mc D)$, which is a target quantity that depends on the forward model $y$ and/or the true posterior $\pi(p \mid y^m)$, and the error model $E (\mc D)$, which is an approximation of $\mc E(\mc D)$ that does not depend neither on the forward model nor on the true posterior, being thus computationally available. The error models we will consider are of the form 
\[
E(\mc D) = \int_\Omega \pi(p \mid y^m, \mc D) e_\mc D (p) \, dp,
\]
with the local error indicator $e_\mc D$ being some function of $p$ depending on the Gaussian process $y_\mc D$ and thus on the design $\mc D$. In the rest of this work, we will add superscripts when discussing specific error models, otherwise, $E$ and $e$ are assumed to be any error model and indicator of this kind.

Different types of error metrics $\mc E$ are conceivable, resulting in different interpretations of the training problem~\eqref{eq:doe}:
\begin{itemize}
    \item Error metrics which quantify the distance between the true posterior $\pi(p \mid y^m )$,~\eqref{eq:bip}, and the surrogate-based posterior $\pi_{\mc D}(p \mid  y^m) $,~\eqref{eq:posterior}. As a metric of this kind, we will consider the Kullback-Leibler divergence between the two posteriors: 
    \[ 
    \dkl{\pi(\cdot \mid y^m)}{\pi( \cdot \mid  y^m,\mc D)} = \int_{\Omega} \pi(p \mid y^m) \log \frac{\pi(p \mid y^m)}{\pi(p \mid  y^m,\mc D)} \, dp.
    \]

    \item Error metrics which quantify the surrogate error in the posterior region. As a metric of this kind, we will consider the posterior-weighted $L^2$ error between the forward model $y$ and the surrogate mean $\bar y$:
    \[  
    \norm{y-\bar y}^2_{L^2 \left (\Omega; \pi(\cdot \mid y^m) \right )}= \int_{\Omega} \pi(p \mid y^m)  \norm{y(p) - \bar y (p) }_2^2 \, dp .
    \]
    
\end{itemize}
These classes of metrics are correlated, but not identical: the first is more general, as it assumes the full posterior distribution as a target, while the second targets directly the GP mean's error and only as a consequence of this the posterior.

\subsubsection{Kullback-Leibler divergence} \label{sec:dKL}
In this section, we consider the Kullback-Leibler divergence as posterior approximation error metric
\[
    \mc E^{\rm KL}(\mc D) = \dkl{\pi(\cdot \mid y^m)}{\pi( \cdot \mid  y^m,\mc D)},
\]
describing the information gain when using the true posterior~\eqref{eq:bip} for inference instead of the approximate posterior~\eqref{eq:posterior}. As the true posterior is unknown, we will replace it by an estimate based on the local surrogate mean $\bar y$ and its variance $\Gamma$ to obtain the corresponding error model.

\begin{theorem} \label{thm:dKL-bound}
    Assume that $\pi_{\mc D}(y^m) \le \alpha \pi(y^m)$ holds for some $\alpha<\infty$. Let  $\phi(a,b) = \frac{1}{2}a + b\sqrt{a}$, and
    \begin{equation}\label{eq:psi}
    \psi(y^m,p) = \frac{1}{2} \trace\left(\Sigma^{-2}\Gamma(p) \right) + \phi\left(\|\bar y(p)-y(p)\|_{\Sigma^{-2}}^2,\|y^m - y(p)\|_{\Sigma^{-2}}\right)
    + \log\alpha.
    \end{equation}
    Then,
    \begin{align}
    \mc E^{\rm KL}(\mc D) 
    &\le \int_\Omega \pi(p\mid y^m) \psi(y^m,p)  \, dp \label{eq:target-a}\\
    &\le \int_\Omega \pi (p\mid y^m, \mc D) \psi(y^m,p) \exp(\psi(y^m,p)) \, dp \label{eq:target-b} 
    \end{align}
    holds.
\end{theorem}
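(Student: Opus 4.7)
The plan is to start by separating the KL divergence into a pointwise log-likelihood-ratio term and a log-evidence-ratio term. Writing $L^{\rm full}(p) = (2\pi)^{-m/2}\det(\Sigma^2)^{-1/2}\exp(-\tfrac12\|y^m-y(p)\|_{\Sigma^{-2}}^2)$ for the fully normalized true-model likelihood corresponding to~\eqref{eq:likelihood}, substituting~\eqref{eq:bip} and~\eqref{eq:posterior} gives
\[
\mc E^{\rm KL}(\mc D) = \int_\Omega \pi(p\mid y^m)\log\frac{L^{\rm full}(p)}{L_{\mc D}(p)}\,dp + \log\frac{\pi_{\mc D}(y^m)}{\pi(y^m)},
\]
and the evidence ratio is bounded above by $\log\alpha$ via the hypothesis.

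The second step is a pointwise upper bound on $\log L^{\rm full}(p)/L_{\mc D}(p)$. The explicit Gaussian forms split the log-ratio into a log-determinant and a quadratic-form contribution. For the log-determinant,
\[
\tfrac12\log\frac{\det(\Sigma^2+\Gamma(p))}{\det\Sigma^2} = \tfrac12\log\det\bigl(I+\Sigma^{-2}\Gamma(p)\bigr) \le \tfrac12\trace\bigl(\Sigma^{-2}\Gamma(p)\bigr),
\]
using that $\Sigma^{-2}\Gamma(p)$ is similar to the PSD matrix $\Sigma^{-1}\Gamma(p)\Sigma^{-1}$ and hence has non-negative eigenvalues, so $\log(1+\lambda)\le\lambda$ applies. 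For the quadratic difference I would first invoke the Loewner ordering $(\Sigma^2+\Gamma(p))^{-1}\preceq\Sigma^{-2}$ to dominate the surrogate Mahalanobis norm by the $\Sigma^{-2}$-norm, and then, setting $a=y^m-y(p)$ and $b=\bar y(p)-y(p)$,
\[
\|a-b\|_{\Sigma^{-2}}^2 - \|a\|_{\Sigma^{-2}}^2 = \|b\|_{\Sigma^{-2}}^2 - 2\langle a,b\rangle_{\Sigma^{-2}} \le \|b\|_{\Sigma^{-2}}^2 + 2\|a\|_{\Sigma^{-2}}\|b\|_{\Sigma^{-2}}
\]
by Cauchy--Schwarz; halving reproduces exactly $\phi(\|\bar y(p)-y(p)\|_{\Sigma^{-2}}^2,\|y^m-y(p)\|_{\Sigma^{-2}})$. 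Summing the three contributions yields the pointwise inequality $\log(\pi(p\mid y^m)/\pi(p\mid y^m,\mc D))\le\psi(y^m,p)$, and integration against $\pi(\cdot\mid y^m)$ delivers~\eqref{eq:target-a}.

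For~\eqref{eq:target-b}, I would exponentiate the same pointwise bound to obtain $\pi(p\mid y^m)\le\pi(p\mid y^m,\mc D)\exp(\psi(y^m,p))$. Each summand in~\eqref{eq:psi} is non-negative (trace of a PSD matrix; $\phi$ on non-negative arguments; and $\log\alpha\ge0$, which one may arrange by enlarging $\alpha$ to at least $1$), so $\psi\ge0$, multiplication by $\psi$ preserves the direction of the inequality, and integration over $\Omega$ finishes the job.

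The main obstacle is the quadratic-form step: one has to switch inner products via the Loewner domination and then package the residual into exactly the $\phi$-combination appearing in~\eqref{eq:psi}, so that the bound vanishes cleanly as $\bar y\to y$ and $\Gamma\to 0$. The log-determinant estimate, the evidence-ratio bound, and the concluding exponentiation trick are routine once the pointwise log-ratio bound is in hand.
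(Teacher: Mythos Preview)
Your proposal is correct and follows essentially the same route as the paper: decompose the log-posterior ratio into a log-determinant term, a quadratic-form difference, and the evidence ratio, bound each separately, and then exponentiate the pointwise bound to pass from~\eqref{eq:target-a} to~\eqref{eq:target-b}. Your Cauchy--Schwarz step with $a=y^m-y(p)$, $b=\bar y(p)-y(p)$ lands directly on the $\|y^m-y(p)\|_{\Sigma^{-2}}$ form appearing in~\eqref{eq:psi} (the paper first reaches $\|y^m-\bar y(p)\|_{\Sigma^{-2}}$ via a triangle inequality before bounding by $\psi$), and your explicit remark that $\psi\ge 0$ (taking $\alpha\ge 1$ without loss of generality) is needed to multiply the density inequality by $\psi$ is a point the paper leaves implicit.
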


Before proving the result, let us briefly discuss the individual terms in the relevant quantity $\psi$ for the surrogate converging to the true forward model, i.e. $\|\Gamma(p)\|\to 0$. As $\Sigma$ is fixed, the first term converges linearly to zero. The second term diminishes due to $\bar y\to y$ in a sense given in Lemma~\ref{lem:surrogate-error-mean} below. Finally, for $\Gamma\to 0$ and $\bar y\to y$, we also expect $\alpha \to 1$.  Thus, the upper bound~\eqref{eq:target-b}, while not being sharp, captures the convergence behavior of the surrogate and is therefore a reasonable error quantity to consider.

\begin{proof}
We start with bounding
\begin{align*}
    2\log \frac{\pi(p \mid y^m)}{\pi( p \mid y^m, \mc D)}
    &= 2\log \left( \frac{\pi(p)\pi(y^m\mid p)}{\pi(y^m)} \frac{\pi_{\mc D}(y^m)}{\pi(p)L_{\mc D}(p)} \right) \\
    &= 2\log\left( \frac{\pi(y^m\mid p)}{\pi(y^m)} \frac{\pi_{\mc D}(y^m)}{\pi_{\mc D}(y^m \mid p, \mc D)} \right)  \\
    &= \log\frac{\det(\Sigma^2+\Gamma(p))}{\det(\Sigma^2)} - \|y^m - y(p)\|_{\Sigma^{-2}}^2 + \|y^m-\bar y(p)\|_{(\Sigma^2+\Gamma(p))^{-1}}^2 + 2 \log \frac{\pi_{\mc D}(y^m)}{\pi(y^m)}.
\end{align*}
Let us treat the terms individually. First, simple linear algebra yields
\[
    \log\frac{\det(\Sigma^2+\Gamma(p))}{\det(\Sigma^2)} = \log \det\left(I_m + \Sigma^{-2}\Gamma(p) \right)\le m \log \left( \frac{1}{m}\trace\left( I_m + \Sigma^{-2}\Gamma(p)\right)\right)
    \le \trace\left(\Sigma^{-2}\Gamma(p) \right).
\]
By assumption, we then have 
\[
2 \log\frac{\pi_{\mc D}(y^m)}{\pi(y^m)} \le 2\log\alpha
\]
and finally
\begin{align*}
    - \|y^m - y(p)\|_{\Sigma^{-2}}^2 &+ \|y^m-\bar y(p)\|_{(\Sigma^2+\Gamma(p))^{-1}}^2 \\
    &\le - \|y^m - y(p)\|_{\Sigma^{-2}}^2 + \|y^m-\bar y(p)\|_{\Sigma^{-2}}^2 \\
    &= \left(y^m-y(p)+y^m-\bar y(p)\right)^T \Sigma^{-2} \left(y^m-y(p)-y^m+\bar y(p)\right) \\
    &\le \|2y^m - (y(p)+\bar y(p))\|_{\Sigma^{-2}} \|\bar y(p)-y(p)\|_{\Sigma^{-2}} \\
    &\le \left(2\|y^m-\bar y(p)\|_{\Sigma^{-2}} + \|\bar y(p)-y(p)\|_{\Sigma^{-2}}\right) 
    \|\bar y(p)-y(p)\|_{\Sigma^{-2}} \\
    &= 2\phi(\|\bar y(p)-y(p)\|_{\Sigma^{-2}}^2,\|y^m-\bar y(p)\|_{\Sigma^{-2}}).
\end{align*}
In sum, we obtain 
\begin{align}
    \log \frac{\pi(p \mid y^m)}{\pi(p\mid y^m, \mc D)}
    &\le  \frac{1}{2} \trace\left(\Sigma^{-2}\Gamma(p) \right) + \phi\left(\|\bar y(p)-y(p)\|_{\Sigma^{-2}}^2,\|y^m- \bar y(p)\|_{\Sigma^{-2}}\right)
    + \log\alpha \notag \\
    &\le \psi(y^m,p) \label{eq:log-ratio}
\end{align}
and thus the claim~\eqref{eq:target-a}. Observing $\pi(p\mid y^m) 
\le \pi(p\mid y^m, \mc D)\exp(\psi(y^m,p))$ by~\eqref{eq:log-ratio} yields the claim~\eqref{eq:target-b}.
\end{proof}

The upper bound~\eqref{eq:target-b} is still not computable efficiently, as the exact forward model $y$ enters $\psi$ in~\eqref{eq:psi}. Strictly speaking, as the forward model $y$ is assumed to be a (realization of  a) Gaussian process, the posterior $\pi(p\mid y^m)$, the Kullback-Leibler divergence $\mc E (\mc D)$, and the upper bound~\eqref{eq:target-b} are themselves random variables. For a computationally available estimate, we derive an upper bound on the mean of~\eqref{eq:target-b}.

\begin{lemma} \label{lem:surrogate-error-mean}
    Let $y(p) \sim \mc N( \bar y(p),\Gamma(p))$.
    Then, the mean deviation is 
    \[
        \mathbb{E}\left[ \|\bar y(p) - y(p)\|_{\Sigma^{-2}}^2\right] = \trace\left(\Sigma^{-2}\Gamma(p) \right),
    \]
    and 
    \[
        \mathbb{E}\left[\phi(\|\bar y(p) - y(p)\|_{\Sigma^{-2}}^2,\|y^m-\bar y(p)\|_{\Sigma^{-2}})\right] \le \phi(\trace\left(\Sigma^{-2}\Gamma(p) \right),\|y^m-\bar y(p)\|_{\Sigma^{-2}})
    \]
    holds.
\end{lemma}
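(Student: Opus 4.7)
The plan is to treat the two claims separately, since the first is an exact identity and the second is an inequality that follows from it by Jensen.

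For the first identity, I would set $X = y(p) - \bar y(p)$, so that $X \sim \mathcal{N}(0,\Gamma(p))$ by assumption. Then $\|\bar y(p)-y(p)\|_{\Sigma^{-2}}^2 = X^T \Sigma^{-2} X$ is a quadratic form in a centred Gaussian, and the standard identity $\mathbb{E}[X^T A X] = \operatorname{tr}(A\,\mathrm{Cov}(X))$ for symmetric $A$ immediately gives $\mathbb{E}[\|X\|_{\Sigma^{-2}}^2] = \operatorname{tr}(\Sigma^{-2}\Gamma(p))$. This is a one-line calculation once one recalls the quadratic-form-in-Gaussian lemma.

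For the second bound, the key observation is that $\phi(a,b) = \tfrac12 a + b\sqrt{a}$ is, as a function of $a$ with $b\ge 0$ fixed, a sum of a linear term and a concave term (since $\sqrt{\cdot}$ is concave on $[0,\infty)$). I would apply linearity of expectation to the $\tfrac12 a$ part, which by the first identity contributes exactly $\tfrac12\operatorname{tr}(\Sigma^{-2}\Gamma(p))$, and Jensen's inequality to the $b\sqrt{a}$ part, obtaining
\[
\mathbb{E}\bigl[\sqrt{\|\bar y(p)-y(p)\|_{\Sigma^{-2}}^2}\bigr] \le \sqrt{\mathbb{E}\bigl[\|\bar y(p)-y(p)\|_{\Sigma^{-2}}^2\bigr]} = \sqrt{\operatorname{tr}(\Sigma^{-2}\Gamma(p))}.
\]
Since the coefficient $b = \|y^m-\bar y(p)\|_{\Sigma^{-2}}$ is deterministic and nonnegative, multiplying through and adding the two contributions yields exactly $\phi(\operatorname{tr}(\Sigma^{-2}\Gamma(p)),\|y^m-\bar y(p)\|_{\Sigma^{-2}})$ on the right-hand side, which is the claim.

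I do not anticipate any real obstacle: the whole proof amounts to recognizing the Gaussian quadratic-form identity plus a single application of Jensen to the concave square-root summand. The only thing worth stating carefully is the concavity structure of $\phi$ in its first argument, so that the split between the equality step and the inequality step is transparent.
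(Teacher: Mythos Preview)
Your proposal is correct and follows essentially the same route as the paper. For the first identity the paper writes $\bar y(p)-y(p)=\Gamma(p)^{1/2}z$ with $z\sim\mathcal N(0,I)$ and diagonalizes to reduce to a sum of $\chi^2$ variables, whereas you invoke the generic identity $\mathbb{E}[X^TAX]=\trace(A\,\mathrm{Cov}(X))$ directly; both are standard and equivalent. For the second claim the paper simply states that it ``is a direct consequence of $\phi$ being concave,'' which is exactly your Jensen argument spelled out in detail.
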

\begin{proof}
    By assumption $\bar y(p) - y(p) = \Gamma(p)^{1/2}z$ with $z\sim \mc N(0,I)$. Consequently, with $S\in\R^{m\times m}$ being the diagonal matrix of eigenvalues of $\Sigma^{-1}\Gamma(p)^{1/2}$, we obtain 
    \begin{align*}
        \|\bar y(p) - y(p)\|_{\Sigma^{-2}}^2
        = \|\Sigma^{-1}\Gamma(p)^{1/2} z \|^2 
        = \| S z \|^2  
        = \sum_{i=1}^m s_i^2 z_i^2.
    \end{align*}
    As $z_i^2$ is $\chi^2$-distributed with mean 1, the first claim follows from the trace being invariant with respect to similarity transforms, i.e.\ $\trace(S) =\trace\left(\Sigma^{-2}\Gamma(p) \right)$. The second claim is a direct consequence of $\phi$ being concave.
\end{proof}

\begin{remark}
    When performing GPR, we assumed a priori that $y$ is a realization of a Gaussian Process $GP(\mu, k)$ and then inferred a posterior predictive distribution $y_\mc D(p)$ for $y(p)$ given the training data, which satisfies the condition in Lemma~\ref{lem:surrogate-error-mean}. Thus, the assumptions required by GPR imply that Lemma~\ref{lem:surrogate-error-mean} can be applied.
\end{remark}
By the above results, as an error model for the Kullback-Leibler divergence error metric we adopt the GP-averaged upper bound
\begin{equation} \label{eq:dKL-model}
\begin{gathered}
    E^{\rm KL}(\mc D) = \int_\Omega \pi (p\mid y^m,\mc D) e^{\rm KL}_\mc D(p) \, dp, 
\quad \text{with} \quad
    e^{\rm KL}_\mc D(p) =  \bar\psi(p) \exp(\bar\psi(p)) \quad\text{and} \\
    \bar\psi(p) =\trace\left(\Sigma^{-2}\Gamma(p) \right) + \|y^m-\bar y(p)\|_{\Sigma^{-2}}\sqrt{\trace\left(\Sigma^{-2}\Gamma(p) \right)}
    + \log\alpha,
\end{gathered}
\end{equation}
which no longer depends on the true posterior or the exact forward model. As the main work is spent for reaching higher accuracy, when the surrogate model is already a reasonable approximation, we will optimistically use $\alpha=1$.

\subsubsection{Posterior-weighted \texorpdfstring{$L^2 $}{L2}  distance} \label{sec:L2}
In this section, we consider the posterior-weighted $L^2$ distance as approximation error metric
\[
    \mc E^{L^2} (\mc D ) = \norm{y-\bar y_\mc D}^2_{L^2 \left (\Omega; \pi(\cdot \mid y^m) \right )} 
\]
 which can also be seen as the expected squared 2-norm surrogate error. 

To obtain a computable error model, we work similarly as done with the Kullback-Leibler divergence: first the expected value with respect to the true posterior is replaced by an expected value with respect to the approximate posterior, then as the forward model is assumed to be a realization of a GP the integrated function are averaged over the Gaussian process. This is summarized in the following two corollaries.

\begin{corollary}
    Let $\psi$ be defined as in~\eqref{eq:psi}. Then,
    \[
        \mc E^{L^2} (\mc D )\leq \int_\Omega \pi(p \mid y^m, \mc D) \norm{y(p) - \bar y (p) }_2^2 \exp(\psi(y^m,p)) \, dp 
    \]
    holds.
\end{corollary}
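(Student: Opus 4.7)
The plan is to reuse the pointwise likelihood-ratio bound already established inside the proof of Theorem~\ref{thm:dKL-bound}. Recall that equation~\eqref{eq:log-ratio} showed
\[
\log \frac{\pi(p\mid y^m)}{\pi(p\mid y^m,\mc D)} \le \psi(y^m,p),
\]
which, after exponentiation, yields the pointwise Radon--Nikodym-type inequality
\[
\pi(p\mid y^m) \le \pi(p\mid y^m,\mc D)\, \exp(\psi(y^m,p)).
\]
This was precisely the ingredient used to deduce~\eqref{eq:target-b} from~\eqref{eq:target-a}, and I would invoke it here in exactly the same fashion.

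From the definition of the error metric,
\[
\mc E^{L^2}(\mc D) = \int_\Omega \pi(p\mid y^m)\, \|y(p)-\bar y(p)\|_2^2 \, dp.
\]
Since $\|y(p)-\bar y(p)\|_2^2 \ge 0$, multiplying the integrand by the pointwise upper bound for $\pi(p\mid y^m)$ preserves the inequality, so I would substitute and obtain
\[
\mc E^{L^2}(\mc D) \le \int_\Omega \pi(p\mid y^m,\mc D)\, \|y(p)-\bar y(p)\|_2^2 \, \exp(\psi(y^m,p))\, dp,
\]
which is the claim. No further estimation or hyperparameter is needed at this step; the assumption $\pi_\mc D(y^m) \le \alpha\, \pi(y^m)$ is already absorbed into $\psi$ via the $\log\alpha$ term.

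There is really no major obstacle here, since the corollary is a direct transfer of the change-of-measure bound from Theorem~\ref{thm:dKL-bound} to a different nonnegative integrand. The only subtle point to note is that nonnegativity of $\|y(p)-\bar y(p)\|_2^2$ is essential for the substitution to yield an upper bound, and that both posteriors are genuine probability densities (so the pointwise bound makes sense almost everywhere with respect to the Lebesgue measure on $\Omega$). I would state the proof in two or three lines, citing~\eqref{eq:log-ratio} explicitly rather than reproving it.
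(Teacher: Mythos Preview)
Your proof is correct and follows exactly the same route as the paper: cite the pointwise bound $\pi(p\mid y^m)\le \pi(p\mid y^m,\mc D)\exp(\psi(y^m,p))$ from~\eqref{eq:log-ratio} and substitute it into the definition of $\mc E^{L^2}(\mc D)$ using nonnegativity of $\|y(p)-\bar y(p)\|_2^2$. The paper's own proof is a single sentence to this effect.
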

\begin{proof}
    As stated in the proof of Theorem~\ref{thm:dKL-bound},~\eqref{eq:log-ratio} implies $\pi(p\mid y^m) \le \pi(p\mid y^m, \mc D)\exp(\psi(y^m,p))$ so the statement follows.
\end{proof}
\begin{corollary}
    let $y(p) \sim \mc N( \bar y(p),\Gamma(p))$. Then, the mean deviation is 
    \[
        \mathbb{E}\left[ \|\bar y(p) - y(p)\|_2^2\right] = \trace(\Gamma(p)),
    \]
    holds.
\end{corollary}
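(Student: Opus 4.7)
The plan is to mirror the argument used for the first claim in Lemma~\ref{lem:surrogate-error-mean}, but now with the identity matrix in place of $\Sigma^{-2}$, which makes things slightly simpler. First I would write $\bar y(p) - y(p) = \Gamma(p)^{1/2} z$ with $z \sim \mc N(0, I_m)$, which is possible by assumption. Then
\[
\|\bar y(p) - y(p)\|_2^2 = z^T \Gamma(p) z.
\]
Taking expectations and using $\mathbb{E}[z z^T] = I_m$ together with the cyclic property of the trace gives
\[
\mathbb{E}\left[ z^T \Gamma(p) z \right] = \mathbb{E}\left[ \trace(\Gamma(p) z z^T) \right] = \trace\left(\Gamma(p) \mathbb{E}[z z^T]\right) = \trace(\Gamma(p)),
\]
which is the claim.

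Alternatively, and perhaps even cleaner, one can diagonalize $\Gamma(p) = U S U^T$ with $S = \mathrm{diag}(s_1,\dots,s_m)$ and $U$ orthogonal. Then $w := U^T z$ is again $\mc N(0, I_m)$ and
\[
\|\bar y(p) - y(p)\|_2^2 = \sum_{i=1}^m s_i w_i^2,
\]
with each $w_i^2$ being $\chi^2$-distributed with mean $1$, exactly as in the proof of Lemma~\ref{lem:surrogate-error-mean}. Summing yields $\sum_i s_i = \trace(S) = \trace(\Gamma(p))$.

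There is no real obstacle here, the statement is a standard identity about second moments of Gaussian vectors. The only subtlety worth mentioning is that, in contrast to Lemma~\ref{lem:surrogate-error-mean}, no concavity argument for $\phi$ is needed since the claim is an equality rather than an inequality, and no weighting matrix $\Sigma^{-2}$ appears inside the norm. This is why the proof collapses to a one-line trace computation.
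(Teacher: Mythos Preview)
Your proposal is correct and essentially matches the paper's approach: the paper's proof is the single line ``the statement follows from Lemma~\ref{lem:surrogate-error-mean} with $\Sigma = I_m$'', and both of your arguments simply unfold that lemma's proof in this special case. Your trace computation is a clean direct route, but there is no substantive difference from the paper.
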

\begin{proof}
    The statement follows from Lemma~\ref{lem:surrogate-error-mean}, with $\Sigma = I_m$.
\end{proof}
By the above statements, for $\bar \psi (p)$ as in~\eqref{eq:dKL-model}, we obtain the error model
\begin{equation} \label{eq:L2-model}
    E^{L^2}(\mc D) = \int_\Omega \pi (p\mid y^m,\mc D) e^{L^2}_\mc D(p) \, dp, 
    \quad \text{with} \quad
        e^{L^2}_\mc D(p) =  \trace(\Gamma(p)) \exp(\bar\psi(p)),
\end{equation}
which is a GP-averaged estimate of the upper bound of $\mc E^{L^2} (\mc D) $ and can be numerically computed at run time.

\subsection{Work model} \label{sec:work-model}

Evaluating the forward model $y$ with a prescribed tolerance $\tau_i$ incurs a computational cost $W$ that depends on the problem, the simulation methodology, the evaluation position $p_i$, and the tolerance $\tau_i$. We assume that the cost can be written as a function of $\tau_i$ only, and in particular does not vary with the position $p_i$.

\begin{remark}
    While it is reasonable to assume that problem and simulation method, once chosen, affect all evaluations in the same way, the independence of work of the position is a simplification. In PDE problems, local solution structures like corner singularities or turbulence that can show up for one parameter value, but not a different one, often require different mesh widths or time step sizes, and thus can affect the computational effort for model evaluation significantly. However, this impact is hard to quantify a priori, though it could in principle be learned during the surrogate model building, and we restrict the attention to the simpler case.
\end{remark}

The actual computational work for an evaluation is not available prior to the evaluation, such that we need to rely on estimates. For finite element methods, a number of established asymptotic estimates for the computational work depending on the tolerance exist. Following~\cite{SemlerWeiser2023,WeiserGhosh2018}, we consider adaptive finite elements of degree $r$ in spatial dimension $l$ and an optimal solver such as multigrid, which yields the asymptotic work
\begin{equation} \label{eq:work-model}
    W(\tau_i) = \tau_i^{-l/r} .
\end{equation}
Being asymptotic for $\tau_i\to 0$, this estimate is quite rough for low accuracy evaluations, but are quite accurate for more expensive ones. 

The total work for a design $\mc D$ is just the sum of the efforts for all the evaluation points, i.e.
\[
    W(\mc D) = \sum_{(p, \tau) \in \mc D} W(\tau).
\]

Given a design $\mc D$, we are interested in refining such design by adding some new points and improving the accuracies of already included points. We say that $\mc D'$ refines $\mc D$, noted $\mc D' \le \mc D$, if $\mc D'$ contains all the points in $\mc D$ with better or equal accuracy, i.e. for any $(p, \tau) \in \mc D$ there exists $\tau' \leq \tau$ such that $(p, \tau') \in \mc D'$. Consequently, we assume that evaluations are stored and can be continued, which implies that, given an already evaluated $\mc D$, the computational cost of evaluating a refinement $\mc D'$ is $W(\mc D' \mid \mc D) = W(\mc D') - W(\mc D)$.

\begin{remark}
    Note that, in the case of FE evaluations of the model, in order to be able to update any of the points already in the design $\mc D$, both the final solution and the final grid need to have been saved after the previous evaluation. This is quite natural in the context we are facing: since the aim is including points where the density of the posterior is significant, we can expect the behavior of the underlying system to be similar in most of the points. This makes saving and re-using grids efficient and effective.
\end{remark}

As discussed in \cite[Sect. 3.3.2]{SemlerWeiser2023}, $\frac{l}{r}$ determines the domain of admissible evaluation precisions and impacts the existence of a unique minimizer in a problem analogous to~\eqref{eq:doe}, with a different target function.

\subsection{Design of computer experiments}\label{sec:doe}

With error and work models at hand, choosing the most efficient design reduces to solving the minimization problem~\eqref{eq:doe}. 
One difficulty in solving the problem is that the impact of design changes on the error estimate $E$, which is necessarily estimated based on the current design, will not be captured particularly well as long as the surrogate is not approximating the forward model sufficiently well. We thus resort to a greedy sequential heuristic for solving~\eqref{eq:doe} by partitioning the computational budget in $\Delta W_j$ for $j=1,\dots, J$ such that $\sum_{j=1}^J \Delta W_j = W$, and then spending it incrementally:
\begin{equation} \label{eq:incremental-doe}
    \min_{\mc D_{j} \le \mc D_{j-1}} E(\mc D_{j}) \quad \text{s.t.} 
    \quad W(\mc D_{j} \mid \mc D_{j-1}) \le \Delta W_j, \quad j=1,\dots,J.
\end{equation}
Here, $\mc D_{j-1}$ is the current design, $\mc D_{j}$ the one to be determined in the current iteration $j$, and $\mc D_0$ is any initial design.

\begin{remark}
    The sequential formulation of the training problem allows for interleaved sampling: instead of first training the GPE and then sampling the posterior, we can sample the posterior at each step of the training problem, accumulating samples in a sliding-window manner.
\end{remark}

Still, the sequential problems are highly nonlinear and non-convex mixed combinatorial-continuous optimization problem. Solving each of them accurately incurs likely more computational effort than can be saved by the fewer and less expensive forward model evaluations. Thus, we separate the selection of evaluation points $p_i$ from the selection of tolerances $\tau_i$.

\subsubsection{Evaluation points} \label{sec:pos-prob}
At each iteration $j$, we first select promising parameter positions to be added into the training set. Random sampling from the approximate posterior $\pi( \cdot \mid  y^m,\mc D_{j-1})$ is sufficient to guarantee convergence~\cite{HelinStuartTeckentrupZygalakis2023}. To obtain better candidates, we seek points where spending computational work promises to be particularly useful. As in~\cite{VillaniUngerWeiser2024,SemlerWeiser2024}, we estimate the impact of spending computational budget in one point by the sensitivity of the global error $E$ with respect to the spending of computational work in that position. 

For a parameter point $p \in \Omega$, let $\tau_p = \frac{\sum_{i=1}^{m} \sqrt{\Gamma_{i,i}(p)}}{m}$ be the mean of the predictive standard deviation in $p$. To compute the sensitivity of $E(\cdot)$ with respect to computational work in $p$, we assume as a baseline value the training set $\mc D_{\tau_p} = \mc D_{j-1} \cup \{ (p, \tau_p) \} $ resulting from the addition of $(p, \tau_p)$ as the next training point. 
Theoretically, it would be sufficient to consider 
\begin{equation} \label{eq:pos-prob-theo}
    \frac{\partial E(\mc{D}_{\tau_p})}{\partial {\tau_p}} \frac{\partial {\tau_p}}{\partial W}
\end{equation}
to obtain the desired sensitivity. Practically, this quantity is unavailable for computation due to two problems: 
\begin{itemize}
    \item computing the error model $E(\mc{D}_{\tau_p})$ involves integration with respect with the approximated posterior $\pi(p \mid y^m, \mc D_{\tau_p}) $ distribution: evaluating its derivative with respect to $\tau_p$ would be costly and demanding from a numerical point of view.
    \item as the predictive mean $\bar y$ depends on the data points $Y_\tau$, how adding a new model evaluation $y_{\tau_p}(p)$ affects $\bar y$ cannot be predicted before the actual model evaluation.
\end{itemize}  

To deal with the first problem, we substitute $\pi(p \mid y^m, \mc D_{\tau_p}) $ with the available posterior $\pi(p \mid y^m, \mc D_{j-1}) $: besides its practical necessity, as it allows us to interchange derivative and integral and reduces greatly the costs of integration through sampling techniques, the substitution can be justified by assuming a strong similarity between the approximated posteriors resulting from the addition of a single point. This results in the approximation \[
    \frac{\partial E(\mc{D}_{\tau_p})}{\partial {\tau_p}} \approx \int_\Omega \pi(p' \mid y^m, \mc D_{j-1}) \frac{\partial e_{\mc D_{\tau_p}}(p')}{\partial \tau_p}  \, dp' .
\]

To solve the second problem, we fix the predictive mean to the mean of $y_{\mc D_{j-1}}$ and consider only the impact of the change in variance, which is computable a priori. The resulting approximation is \[
\frac{\partial e_{\mc D_{\tau_p}}(p')}{\partial \tau_p} \approx \frac{\partial e_{\mc D_{\tau_p}}(p')}{\partial \Gamma(p')}\frac{\partial \Gamma(p')}{\partial \tau_p} .
\]

These approximations provide an estimate of~\eqref{eq:pos-prob-theo} which, as the derivatives can be computed analytically and are explicitly formulated in Appendix~\ref{app:derivatives}, can be evaluated at run time with little cost:
\begin{equation} \label{eq:acquisition}
    \mc A(p) = \int_\Omega \pi(p' \mid y^m, \mc D_{j-1}) \frac{\partial e_{\mc D_{\tau_p}}(p')}{\partial \Gamma(p')}\frac{\partial \Gamma(p')}{\partial \tau_p}   \ dp' \ \frac{\partial {\tau_p}}{\partial W}.
\end{equation}
We adopt $\mc A $ as an acquisition function, and select a number of its local maximizers as candidate points. 

\subsubsection{Evaluation tolerances} 

Having obtained a set of candidates $ \mc C = \{p_{s+1}, \dots,  p_{s+r}\}$, we need to select which of these to add to the experimental design as well as to optimize the tolerances of both the newly included points and the points already in the previous design $\mc D_{j-1} = \left( (p_i,\tau_i)\right)_{i=1,\dots,s}$.

For a candidate $p_i$ excluded from the training set, we write the corresponding tolerance to be infinite, $\tilde \tau_i = \infty$. This way, the new design $\mc D_j$ is determined by the assignment of a (possibly infinite) tolerance $\tilde \tau_i$ to each point in $\{ p_1, \dots, p_{s+r} \}$ and the choice of a new design is equivalent to the selection of one element of \begin{align*}
    \mc T_j = \Big \{ T = (\tilde \tau _1, \dots, \tilde \tau _{s+r}) \in ( \R ^+ \cup \{ +\infty \} ) ^{s+r} \mid \tilde \tau_i & \leq \tau_ i \text{ for } i \leq s \\
    & \text{and } \ \sum_{i=1}^{s+r} W(\tilde \tau_i ) \leq \Delta W_j  + W(\mc D) \Big \}.
\end{align*}

To be able to numerically solve Problem~\eqref{eq:incremental-doe} as a function of the tolerances, we face similar problems to the one we faced for the selection of candidates~\eqref{eq:pos-prob-theo}. 

Computing $E(\mc D_j)$ involves integration with respect with the approximated posterior $\pi(p \mid y^m, \mc D_{j}) $: doing so would make the solution of the minimization problem as a function of the accuracies practically challenging, as approximating the integral would require considerable computational effort for each evaluation of $E(\cdot)$. Again, we instead integrate with respect to $\pi(p \mid y^m, \mc D_{j-1})$.

Moreover, how the evaluation of $y$ within a certain tolerance affects $\bar y$ cannot be predicted without actual model evaluations. As a consequence, we again fix the predictive mean to the mean of $y_{\mc D_{j-1}}$.
By writing $e_{\mc D(T)}$ for the local error indicator as a function of the predictive variance $\Gamma(\cdot)$ corresponding to the tolerances $T \in T_j$ assigned to the candidates $\{ p_1, \dots, p_{s+r} \}$, the above approximations result in the following formulation:
\begin{equation} \label{eq:tolerance-prob}
     \min_{T \in \mc T_j} \int_\Omega \pi(p \mid y^m, \mc D_{j-1}) e_{\mc D (T)} (p) \, dp.
\end{equation}
As the derivatives of the target function are the ones involved in the acquisition function~\eqref{eq:acquisition} and described in Appendix~\ref{app:derivatives}, gradient information about the target is available and the problem can be treated numerically: the next section describes the computational aspects and provides a general outline of the training strategy.

\subsection{Posterior sampling with adaptively trained surrogate}\label{sec:algo}

The previous section introduced the acquisition function $\mc A$~\eqref{eq:acquisition} and the tolerances optimization problem~\eqref{eq:tolerance-prob} in order to render it possible to treat the sequential design of experiments problem~\eqref{eq:incremental-doe} which constitutes the core of the training strategy. 

At iteration $j$, the current posterior $\pi(p \mid y^m, \mc D_{j-1}) $ is sampled and a number of the oldest samples from preceding iterations is discarded, to obtain the chain of available samples $\mc S_j$. Conserving part of the samples is especially efficient after a few iterations, when the posterior does not change drastically, and in settings where the cost of sampling $\pi(p \mid y^m, \mc D_{j-1}) $ is not insignificant when compared with the cost of model evaluation.

As the selections of both the candidate points and the evaluation tolerances involve the estimation of a great number of integrals involving the currently available posterior approximation $\pi(p \mid y^m, \mc D_{j-1}) $, the adoption of MCMC integration~\eqref{eq:MC-integration} exploiting the samples $\mc S_j$ is greatly efficient. 

The proposed sampling strategy can be summarized by the following algorithm, and Figure~\ref{fig:2dsteps} provides a visualization of the strategy's iterations.
\phantomsection \makeatletter\def\@currentlabel{Algorithm 1}\makeatother
\label{algo:AGP} \textbf{Algorithm 1.}  Be given an initial design $\mc D_0$, empty sample chain $\mc S_0$ and a fractionating of the budget $\Delta W_1,\dots, \Delta W_J$. 

For $j =1, \dots, J$ do: 
\begin{enumerate}
    \item \textbf{Sample the posterior.} Decide a number $n_j$ of new samples to draw, a number $h_j$ of old samples to discard. Discard the $h_j$ oldest samples from $\mc S_{j-1}$, concatenate the remaining with $n_j$ samples from $\pi(p \mid y^m, \mc D_{j-1}) $: the resulting chain of samples is $\mc S_j$.
    \item \textbf{Select the candidates.} For any $p \in \Omega$, approximate $\mc A(p)$ by \begin{equation} \label{eq:discrete-pos-prob}
        \mc A(p) \approx \frac{1}{|\mc S_j|} \Bigg ( \sum_{p' \in \mc S_j} \frac{\partial e_{\mc D_{\tau_p}}(p')}{\partial \Gamma(p')}\frac{\partial \Gamma(p')}{\partial \tau_p} \Bigg )  \frac{\partial {\tau_p}}{\partial W}
    \end{equation} and select a number $s_j$ of local maximizers as candidates.
    \item \textbf{Select the tolerances.} Solve the discretization 
    \begin{equation}\label{eq:discrete-tol-prob}
        \min_{T \in \mc T_j} \frac{1}{|\mc S_j|} \sum_{p \in \mc S_j} e_{\mc D (T)} (p)
    \end{equation} of Problem~\eqref{eq:tolerance-prob} to determine the new evaluation tolerances $T = (\tau_1, \dots \tau_{s_j+r_j})$. If $\tau_i = \infty$ for some $i = s_j +1, \dots , s_j+r_j$, exclude the corresponding candidate $p_i$ from the design. This results in the new experimental design $\mc D_j  = \big ( (p_i, \tau_i))_{i = 1, \dots , s_{j+1}}$
    \item \textbf{Evaluate the model.} For every $(p, \tau )$ in $\mc D_j$ which was updated, i.e. the new evaluation points and the old points where the tolerance has changed, evaluate the model $y_\tau(p)$. Having obtained the new values $(y_1, \dots, y_{r_{j+1}})$, update the training set and re-train the surrogate to obtain $y_{\mc D_j}$.
\end{enumerate}
At last, the final set of samples $\mc S_{J+1}$ is obtained by drawing $n_{J+1}$ samples from $\pi(p \mid y^m, \mc D_J)$ and discarding the oldest $h_{J+1}$ samples, for some numbers  $n_{J+1},h_{J+1}$. \medskip

\begin{figure}[H]
\makebox[\textwidth][c]{\includegraphics[height = 110pt]{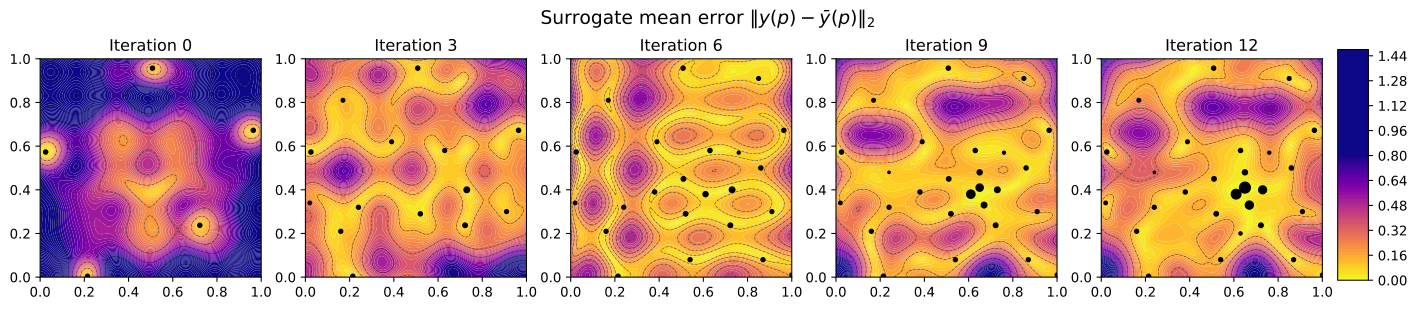}}
\vspace{-0.5cm}\caption{Surrogate mean error and training points (black) for different steps of a realization of~\ref{algo:AGP} under the experimental conditions described in Experiment~\ref{exp:2d}. The size of the each training point is proportional to the computational work spent in the point.}  \label{fig:2dsteps}
\end{figure}

Note that while local maximizers of~\eqref{eq:discrete-pos-prob} have to be found by a gradient-free optimization method, derivatives of the target in Problem~\eqref{eq:discrete-tol-prob} are available and gradient-based optimization methods can be utilized. In both cases, the problem is often non-convex and thus we resort to multi-start optimization.

\section{Numerical experiments} \label{sec:experiments}
To test \ref{algo:AGP}, a Python implementation has been tested on three different forward models, running on an Intel Core i7-9700T × 8 CPU. The first forward model is purely synthetic and does not have any underlying physical system, while the second and the third are based on the diffusion and the Poisson equation respectively.

Multiple runs are performed with different settings and choices, comparing the performances under different conditions:
\begin{itemize}
    \item \textbf{Error metric.} The error models introduced in Section~\ref{sec:error-model} are considered and compared by evaluating each under both error metrics.
    \item \textbf{Model cost.} The work model introduced in Section~\ref{sec:work-model} depends on the discretization cost, thus different discretization costs are considered.
    \item \textbf{Budget fractionating.} As the sequential formulation~\eqref{eq:incremental-doe} requires a division of the computational budget, we fraction the available budget a priori. We consider constant and geometric division of the budget, resulting in the \texttt{AGP-const} and \texttt{AGP-geom} strategies respectively. 
    \item \textbf{Adaptivity level.} The adaptive strategy presented is compared with a fixed-tolerance training strategy, \texttt{posAGP}, which chooses evaluation points adaptively, but does not optimize the tolerances. As a benchmark, a Latin Hypercube Sampling training point generation \texttt{LHSGP} is also considered.  
\end{itemize}
To assess the interaction between these aspects, all different combinations are tested. Multiple simulated measurement are generated resulting in different target posteriors, for each measurement the strategy is run under each experimental condition multiple times and the results are then averaged. Averaging over repeated runs is necessary as the outcomes depend on the evaluations of the discrete model which, as explained in the next Section, has random behavior in our simulation.

The posterior is sampled in a sliding-window manner, keeping part of the samples from the previous iteration. Both the number of new and of discarded samples grow quadratically with the number of iterations: the precise numbers for each forward model are described in the subsection dedicated.

\subsection{Implementation details}

In all the three cases, the forward model $y$ is analytically available and thus no discretization is necessary: the discretization error at tolerance $\tau$ is thus simulated by adding zero-mean Gaussian noise $\mc N (0, \tau I_m)$ to a forward model evaluation $y(p)$. The choice of analytical forward models has been taken as it allows to evaluate the actual error and compare convergence rates, reduces the time required by the simulation, and renders it possible to test different model costs without having to implement different numerical models. 

The Gaussian process surrogate model uses a GPyTorch~\cite{GPyTorchPaper} base model, and considers a separable Gaussian kernel~\eqref{eq:gauss-kernel} with diagonal $K_c$, modeling independent response components with individual scaling. Hyperparameter-tuning is performed using GPyTorch's Adam optimizer on the marginal likelihood, with Automatic Relevance Determination and lengthscale prior $Gamma(1,10)$ to optimize those hyperparameters determining the norm $\norm{\cdot}_k$. The model is set to work with double-precision floating-point numbers.

To sample the posterior, we consider the emcee~\cite{emceePaper} implementation of Ensemble Sampling, a MCMC technique introduced by~\cite{GoodmanWeare} which guarantees invariancy with respect to affine transformation of the parameter space by utilizing multiple correlated chains. The effectiveness of sampling crucially depends on the proposal function, which in the case of Ensemble samplers are also known as moves. As the posterior distribution could potentially exhibit multimodal behavior, we adopt the Differential-Independence Mixture Ensemble move presented by~\cite{Boehl}, which adapts to Ensemble sampling the Differential Evolution MCMC framework introduced by~\cite{TerBraak} and behaves efficiently on multimodal distributions. 

The two discrete optimization problems to determine evaluation accuracies and tolerances are solved through the SciPy optimizer \texttt{scipy.optimize.minimize}. Local maxima of~\eqref{eq:discrete-pos-prob} are found using the L-BFGS-B algorithm~\cite{ZhuBirdNocedal}. Problem~\eqref{eq:discrete-tol-prob} is solved by applying a coordinate transformation in order to linearize the work constraint and then by utilizing the Sequential Least Squares Quadratic Programming algorithm. In both cases, we adopt a multi-start approach by selecting a number of initial points through a low-discrepancy sequence. 

\subsection{2D parameter space - synthetic example}\label{exp:2d}

The first example considers a two dimensional parameter space $\Omega = [-\frac{1}{2},\frac{1}{2}]^2 $ with a flat prior and a three dimensional measurement space, $\R^3$. The forward map is purely synthetic 
\[
y(x,y) = L(x,y) + \varphi(x,y)\begin{bmatrix} 1,1,1 \end{bmatrix}^T, \]
and given by the sum of a linear term $L$ and a scalar oscillatory term $\varphi$
\[
    L(x,y) = \Big [\sin{(k)} + \cos{(k)},  \ \sin{(k)} - \cos{(k)}  \Big ]_{k=0,10,20}\begin{bmatrix} x, y \end{bmatrix}^T, \]
    \[
    \varphi(x,y) = \frac{1}{10}\big( \sin(20x-2) +\sin(20y-2)  \big).
\]
\begin{figure}[H] 
\begin{centering}
\includegraphics[height = 110pt]{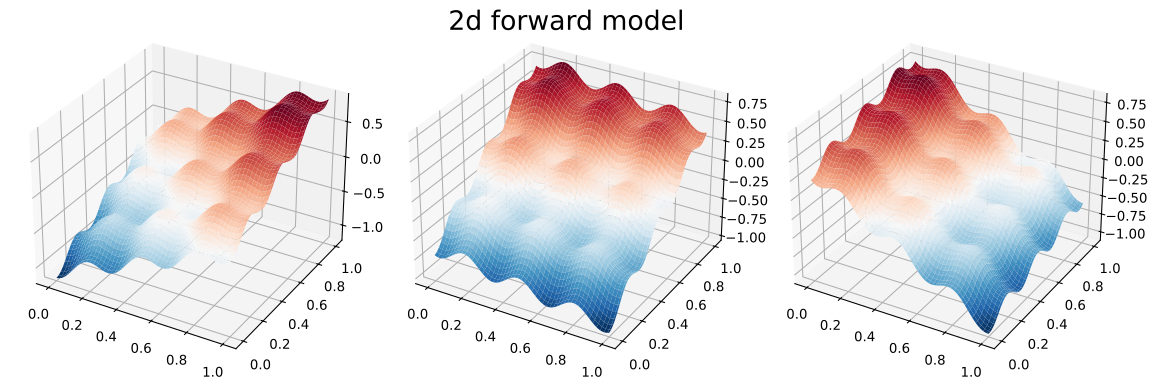}
\caption{Ground truth forward model for the 2-d parameter space.} \label{fig:2Dtruth}
\end{centering}
\end{figure}
We consider 5 different simulated values for $y^m$ by drawing on the parameter space via Latin Hypercube Sampling, evaluating $y$ and adding normal noise $\mc N (0, \Sigma^2)$ for $\Sigma= 0.02\cdot I_3$. 

For the initial design $\mc D_0$, five initial evaluation points are selected by Latin Hypercube Sampling with default tolerance $\tau = 0.05$. Every iteration we seek up to 3 local maximizers of the acquisition function as candidates and we perform $J=13$ iterations. The work model~\eqref{eq:work-model} is set to estimate the model cost for $\frac{l}{r} =1, 1.5, 2, 3$.

The assigned budget is $13\cdot 3 \cdot \tau^ {-\frac{l}{r}}$, amounting to 13 iterations with 3 points each with tolerance $\tau$ for the non-adaptive benchmark strategies \texttt{posAGP} and \texttt{LHSGP}, where no tolerance optimization is considered. For the adaptive training strategy, a constant budget division of $3 \cdot \tau^ {-\frac{l}{r}}$ per iteration is adopted for \texttt{AGP-const} and a geometric budget division of $\alpha^{j-1} \cdot \tau^ {-\frac{l}{r}}$ at iteration j, with $\alpha = 1.173$, is adopted for \texttt{AGP-geom}.

The chain of samples is built by sampling 1600 samples at the first iteration up to 16000 in the final one, while 1600 samples are discarded in the second iteration up to 8000 in the last. Precisely, at iteration $j$ the number $n_j$ of samples to draw and the number $h_j$ of old samples to discard are given by \[
n_j = 1600 + 14400 \left \lfloor\frac{j-1}{13} \right \rfloor^2 , \quad 
h_j = 1600 + 6400 \left \lfloor\frac{j-1}{13} \right \rfloor^2,
\]
where $\lfloor \cdot \rfloor$ denotes the floor of a real number. Figure~\ref{fig:2Dsamples} illustrates the gradual accumulation of samples.
\begin{figure}[H] 
\begin{centering}
\includegraphics[height = 80pt]{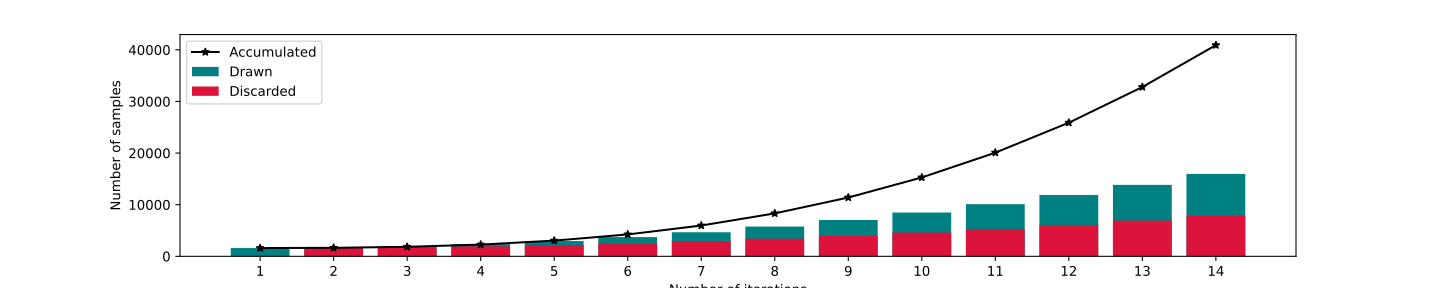}
\caption{Accumulated samples (black line), drawn samples (green bar), discarded samples (red bar) for the 2-d example.} \label{fig:2Dsamples}
\end{centering}
\end{figure}
Figure~\ref{fig:2dsteps} in Section~\ref{sec:algo} exemplifies a realization of \texttt{AGP-const} for work model with $\frac{l}{r} = 1$ and $L^2$ target metric. After an exploration of the parameter space in the first iterations, the selection of training points focuses on areas where the posterior places significant mass: fewer new points are added and the accuracy of relevant points is improved, resulting in a notable reduction of the surrogate error in the posterior region.

Figure~\ref{fig:2d-convergence} depicts the convergence rates for every experimental set-up of the 2-d. As repeated runs are performed to average on the simulated discretization error, the metric value for every run(thinner line), the average performance for each measurement set (intermediate thickness) and overall mean (thicker line) are plotted.
The results show a remarkable improvement of the convergence metric for the adaptive strategies compared to space-filling designs, while tolerance optimization is particularly effective for the Kullback-Leibler divergence metric and when the cost of a more accurate model evaluation is not significantly higher. 

Figure~\ref{fig:2dtols} shows the distribution of evaluation tolerances over points for all the runs of \texttt{AGP-geom} with target KL divergence: for an higher FE simulation cost most points have the default tolerance $\tau$, while if the relative cost of improving the evaluation's accuracy is low the tolerances have a wider distribution. This reflects the reduction of the effectiveness of tolerance optimization for relatively higher cost of accurate simulations.

\begin{figure}[H]
\begin{centering}
\includegraphics[height = 90pt]{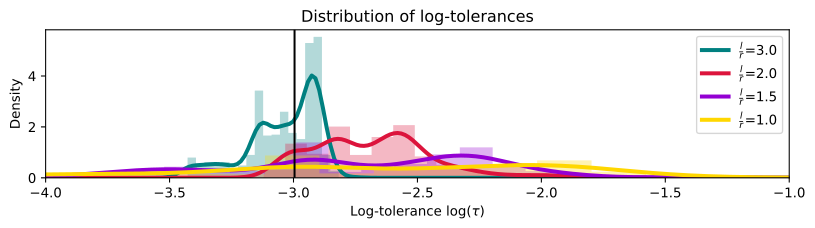}
\caption{Distribution of the logarithm of the evaluation tolerances for \texttt{AGP-geom} with target KL divergence and different simulation costs. The logarithm of the default tolerance is represented by the black vertical line. On average 34 points are included in the training set, resulting in around 850 different evaluation accuracies for each of the costs. The density function is reconstructed by \texttt{scipy.stats.gaussian\_kde}. 
}  \label{fig:2dtols}
\end{centering}
\end{figure}
\begin{figure}[H]
   \hspace{-1cm}
    \begin{minipage}[b]{0.1\textwidth}
    \centering
    $ \frac{l}{r}= 1 $ \\
    \vspace{2.4cm}
    $ \frac{l}{r}= 1.5 $ \\
    \vspace{2.4cm}
    $ \frac{l}{r}= 2 $ \\
    \vspace{2.4cm}
    $ \frac{l}{r}= 3 $ \\
    \vspace{1.8cm}
  \end{minipage}
  \begin{minipage}[b]{0.45\textwidth}
    \includegraphics[width=1.1\textwidth]{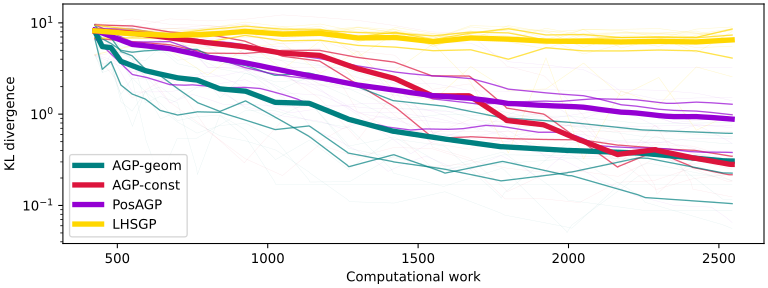}
    \includegraphics[width=1.1\textwidth]{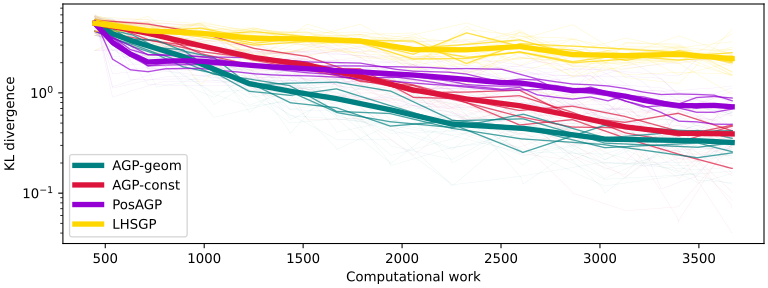}
    \includegraphics[width=1.1\textwidth]{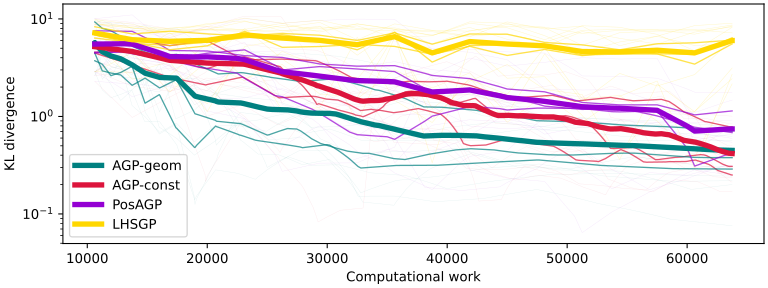}
    \includegraphics[width=1.1\textwidth]{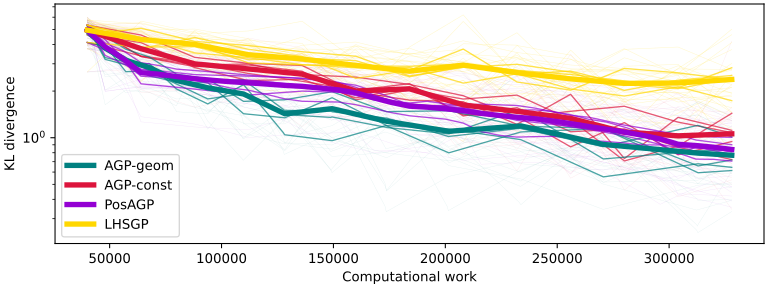}
    \centering
    Kullback-Leibler divergence.
  \end{minipage}
  \hfill
  \begin{minipage}[b]{0.45\textwidth}
    \includegraphics[width=1.1\textwidth]{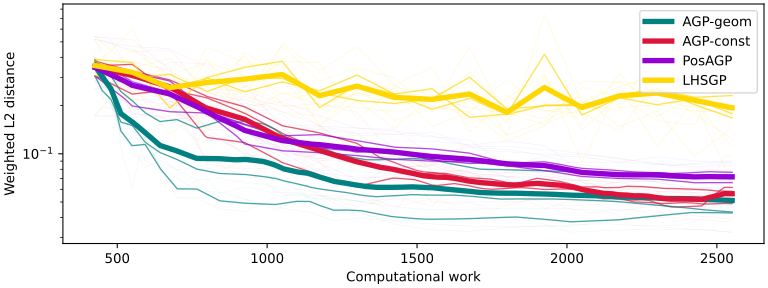}
    \includegraphics[width=1.1\textwidth]{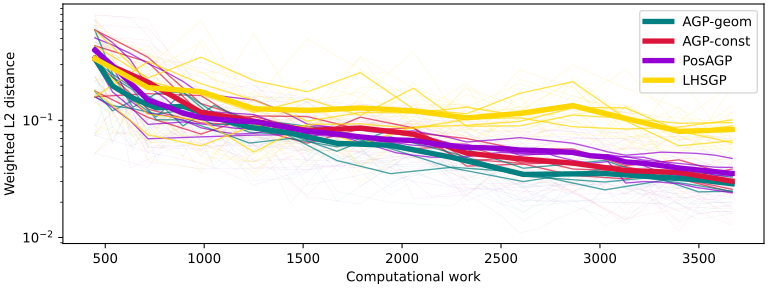}
    \includegraphics[width=1.1\textwidth]{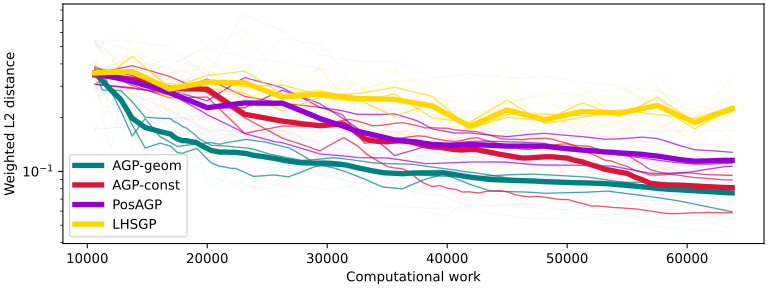}
    \includegraphics[width=1.1\textwidth]{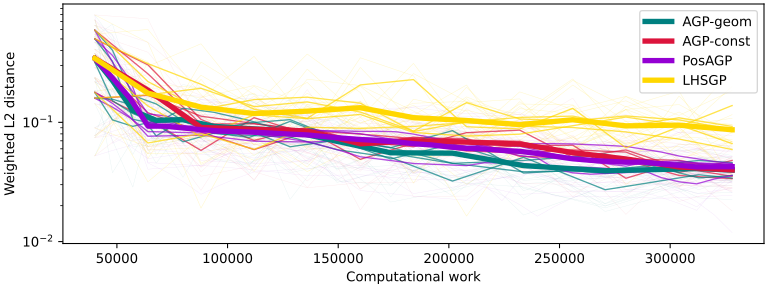}
    \centering
    Weighted $L^2$ distance
  \end{minipage}

  \caption{Convergence rates for the two different error models and four different FE simulation costs  on a 2-d parameter space. The thinner lines represent individual runs, the intermediate-thickness lines the average over 5 runs with the same measurements and posterior and the thicker lines the average on the whole 25 runs.}\label{fig:2d-convergence}
\end{figure}

\subsection{3D parameter space - diffusion equation}\label{sec:3d-exp}

The second numerical experiment involves the diffusion equation on $\R^3$
\[
u_t = \Delta u
\]
with initial conditions given by Dirac's delta centered in some $x_0$
\[
u (0,x; x_0) = \delta_{x_0}(x).
\]
We consider the fundamental solution of the diffusion equation as a solution of the above problem
\[
u(t,x; x_0) = (4 \pi t)^{-\frac{3}{2}} \exp \left( - \frac{\norm{x-x_0}_2^2}{4t}\right).
\]
We treat $x_0$ as an unknown parameter and want to identify it by measuring $u(t,x;x_0)$ for certain $t,x$. As a parameter space, $\Omega = [-1,1]^3$ is considered and a flat prior is assumed.

We consider three measuring times \[\mc Z =  \{ 0.5, 0.7, 1\}\] and six sensors positions given by the intersection of the cartesian axis with the unit sphere, \[\mc X=\{(\pm 1,0,0),(0,\pm 1,0),(0,0,\pm 1)\} \] and obtain an 18-dimensional measurement space $\R^{18}$. The measurement vector $y^m$ is then generated by adding centered Gaussian noise
\[
y_m \sim \mc N \left( \left( u(t^m,x^m;x_0)  \right)_{(t^m,x^m)\in \mc Z \times \mc X}, \Sigma^2 \right)
\]
with $\Sigma = 0.01 I_{18}$. 3 different simulated values of $y^m$ are generated and each of those is run under different experimental conditions 5 times. 

For the initial design $\mc D_0$, nine initial evaluation points are selected by Latin Hypercube Sampling with default tolerance $\tau = 0.02$. Every iteration we seek up to 4 local maximizers of the acquisition function as candidates and we perform $J =15$ iterations. For this experiment, we compare the work models with $\frac{l}{r} =1, 2$.

The assigned budget is $15\cdot 4 \cdot \tau^ {-\frac{l}{r}}$, amounting to 15 iterations with 4 points each with tolerance $\tau$ for \texttt{posAGP} and \texttt{LHSGP}. A constant budget division of $4 \cdot \tau^ {-\frac{l}{r}}$ per iteration and a geometric budget division of $\alpha^{j-1} \cdot \tau^ {-\frac{l}{r}}$ at iteration j, for $\alpha = 1.178$, are adopted for \texttt{AGP-const} and \texttt{AGP-geom} respectively.

The chain of samples is built by sampling 2400 samples at the first iteration up to 24000 after the final one, while 2400 samples are discarded in the second iteration up to 12000 after the last. At iteration $j$ the number $n_j$ of samples to draw and the number $h_j$ of old samples to discard are given by \[
n_j = 2400 + 21600 \left \lfloor\frac{j-1}{15} \right \rfloor^2 , \quad 
h_j = 2400 + 9600 \left \lfloor\frac{j-1}{15} \right \rfloor^2.
\]

\begin{table}
    \centering
    \begin{tabular}{cccc}
         &\texttt{AGP-geom}  & \texttt{AGP-const} & No tolerance optimization \\
         KL divergence & 43 & 39 & 69 \\
         $L^2$ distance & 29 & 28  & 69 \\
    \end{tabular}
    \caption{Average number of points in the training set of the 3-d experiment for different strategies and metrics, 25 runs each and $\frac{l}{r} = 1$.}
    \label{tab:3dpts}
\end{table}
In Table~\ref{tab:3dpts} the average number of training points for different experimental configurations is given. Strategies which optimize the evaluation tolerances include significantly less points than the non-adaptive counterparts, reducing the computational costs of evaluations of the surrogate model. The effect is particularly remarkable under the $L^2$ error model, where the number of points is more than halved.

Figure~\ref{fig:3d-convergence} depicts the convergence rates for every experimental set-up of the 3-d experiment. As for the 2-d case, single realizations (thinner line), average performance for each target posterior (intermediate thickness) and overall mean (thicker line) are plotted. The results are consistent with the ones obtained in the 2-d example, but in this experiment the $L^2$ metric is more affected accuracy optimization, possibly due to the more complicated forward model. Lower evaluation costs are again more sensible to accuracy optimization.

Figure~\ref{fig:3Dposterior} compares the posteriors resulting from a realization of the different training strategies. The adaptive algorithms have as a target metric the Kullback-Leibler divergence, while the work model is set to $\frac{l}{r} = 1$. All the GP-based posterior capture some aspects of the ground truth, but the fully adaptive \texttt{AGP} posteriors offer a better approximation for the same computational work.

\begin{figure}[H]
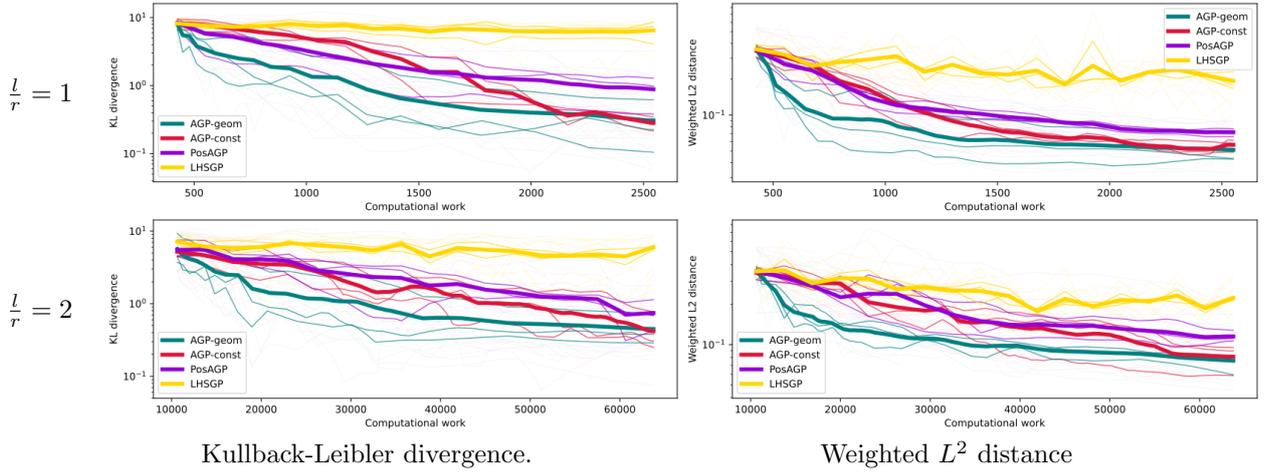

    \hspace{-1cm}
     \begin{minipage}[b]{0.1\textwidth}
     \centering
     $ \frac{l}{r}= 1 $ \\
     \vspace{2.4cm}
     $ \frac{l}{r}= 2 $ \\
     \vspace{1.8cm}
   \end{minipage}
   \begin{minipage}[b]{0.45\textwidth}
    \includegraphics[width=1.1\textwidth]{dKL1.0.png}
    \includegraphics[width=1.1\textwidth]{dKL2.0.png}
    \centering
    Kullback-Leibler divergence.
  \end{minipage}
   \hfill
   \begin{minipage}[b]{0.45\textwidth}
    \includegraphics[width=1.1\textwidth]{L21.0.png}
    \includegraphics[width=1.1\textwidth]{L22.0.png}
     \centering
     Weighted $L^2$ distance
   \end{minipage}
 
   \caption{Convergence rates for the two different error models and two different FE simulation costs on a 3-d parameter space.}\label{fig:3d-convergence}
 \end{figure}

\begin{figure}[H] 
\begin{centering}
\includegraphics[height = 320pt]{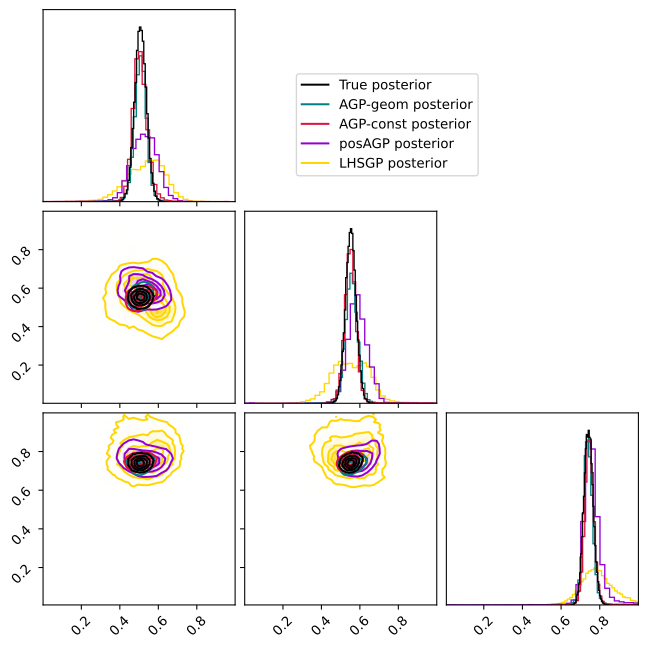}
\caption{Comparison of the samples from the different posteriors in one run of the 3-d experiment. Level lines correspond to $20\%,45\%,70\%, 90\%$ of the total number of samples. Plot generated using \texttt{corner.py}\cite{corner}.} \label{fig:3Dposterior}
\end{centering}
\end{figure}

\subsection{4D parameter space - Poisson equation}

The third numerical experiment involves the distributional Poisson equation on $\R^2$, with two Dirac's delta sources of opposite sign
\[
 \Delta u = \delta_{x_1}(x) - \delta_{x_2}(x)
\]
decaying at infinity
\[
\lim_{x \to \infty } u (x; x_1, x_2)= 0.
\]
As a solution, we consider the sum of Green's functions, omitting the $\frac{1}{2 \pi}$ constant:
\[
u(x; x_1, x_2) = - \log(\norm{x-x_1}_2) + \log(\norm{x-x_2}_2).
\]

We aim at identifying $x_1$ and $x_2$ by measuring $u(x;x_1, x_2)$ in 12 equally spaced positions on the unit sphere, 
\[\mc X = \left\{ \left(\cos\left(\frac{2 \pi i}{12}\right), \sin\left(\frac{2 \pi i}{12}\right) \right)    \right\}_{i = 0, \dots , 11 }. \]
Figure~\ref{fig:4Dmodel} provides a plot of $u(x;x_1, x_2)$ for $x_1 = [0.4, -0.67], x_2 = [-0.25, 0.67]$ and of the sensors in $\mc X$.

This results in a 12-dimensional parameter space $\R^{12}$ and in the measurement vector $y^m$ being generated by adding centered Gaussian noise
\[
y_m \sim \mc N \left( \left( u(x^m;x_1,x_2)  \right)_{x^m \in \mc X}, \Sigma^2 \right)
\]
with $\Sigma = 0.05 I_{12}$. 3 different simulated values of $y^m$ are generated and each of those is run under different experimental conditions 4 times. As a parameter space, $\Omega = [-1,1]^4$ with a flat prior is considered.

\begin{wrapfigure}{r}{0.5\textwidth}
\begin{centering} 
\includegraphics[width=0.48\textwidth]{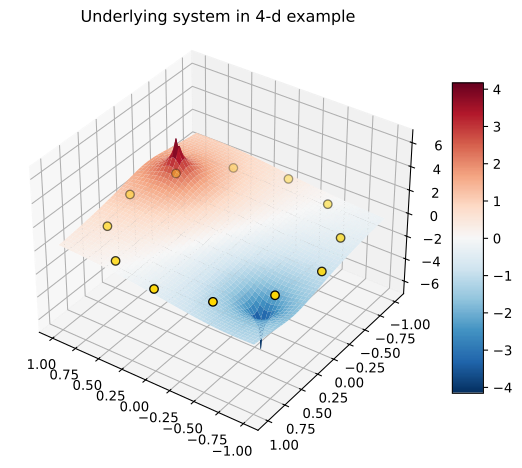} 
\caption{Considered solution and sensors.} \label{fig:4Dmodel}
\end{centering}
\end{wrapfigure}

For the initial design $\mc D_0$, 17 initial evaluation points are selected by Latin Hypercube Sampling with default tolerance $\tau = 0.04$. Every iteration we seek up to 5 local maximizers of the acquisition function as candidates and we perform $J= 20$ iterations. For this experiment, we compare the work models with $\frac{l}{r} =1, 2$.

The assigned budget is $20\cdot 5 \cdot \tau^ {-\frac{l}{r}}$, amounting to 20 iterations with 5 points each with tolerance $\tau$ for \texttt{posAGP} and \texttt{LHSGP}. A constant budget division of $5 \cdot \tau^ {-\frac{l}{r}}$ per iteration and a geometric budget division of $\alpha^{j-1} \cdot \tau^ {-\frac{l}{r}}$ at iteration j, for $\alpha = 1.148$, are adopted for \texttt{AGP-const} and \texttt{AGP-geom} respectively.

The chain of samples is built by sampling 3200 samples at the first iteration up to 32000 after the final one, while 3200 samples are discarded in the second iteration up to 16000 after the last. As the cost of sampling starts to be significant given the considerable number of runs we need to perform, samples are drawn and discarded every two iterations only. At iteration $j$ the number $n_j$ of samples to draw and the number $h_j$ of old samples to discard are given by \[
n_j = 3200 + 28800 \left \lfloor\frac{j-1}{20} \right \rfloor^2 , \quad 
h_j = 3200 + 11800 \left \lfloor\frac{j-1}{20} \right \rfloor^2
\]
if $j \equiv_2 1$ and $0$ else.
Figure~\ref{fig:4Dsamples} illustrates the gradual accumulation of samples.
\begin{figure}[H] 
\begin{centering} 
\includegraphics[height = 80pt]{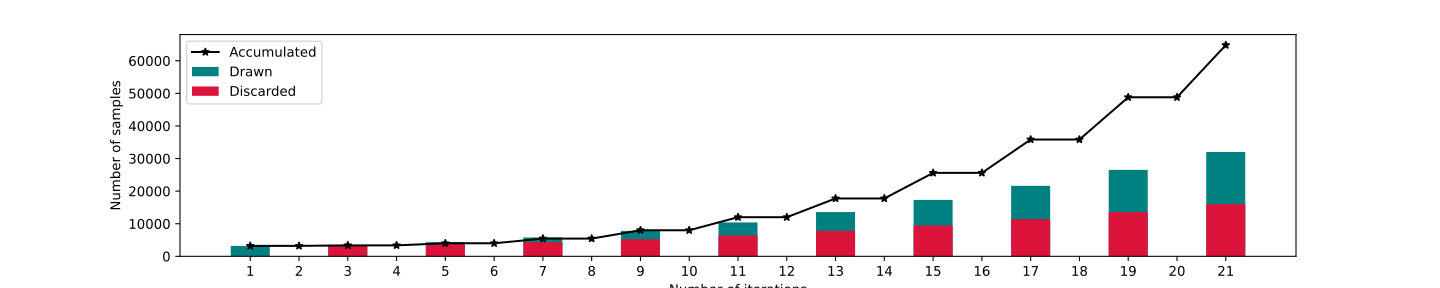} 
\caption{Accumulated samples (black line), drawn samples (green bar), discarded samples (red bar) for the 4-d example.} \label{fig:4Dsamples}
\end{centering}
\end{figure}

In Table~\ref{tab:4dpts} the average number of training points for different experimental configurations is given. The results are consistent with Experiment~\ref{sec:3d-exp}, with again a considerable reduction of training points from tolerance optimization. In this case, the gap between the two metrics is tighter.

Figure~\ref{fig:4d-convergence} depicts the convergence rates for every experimental set-up of the 4-d experiment. Again, single realizations, average performance for each target posterior and overall mean are plotted. The results are consistent with the ones obtained in the other experiments. In this case, tolerance optimization seems to be effective even for higher costs models. Moreover, \texttt{AGP-geom} obtains quickly a good accuracy level which is, on average, matched by \texttt{AGP-const} only in the final iterations. A possible explanation for both these behaviors lies in the higher dimension of the problem, as more points, more iterations, and a greater budget are considered.

\begin{table}
    \centering
    \begin{tabular}{cccc}
         &\texttt{AGP-geom}  & \texttt{AGP-const} & No tolerance optimization \\
         KL divergence & 84 & 81 & 117 \\
         $L^2$ distance & 74 & 78  & 117 \\
    \end{tabular}
    \caption{Average number of points in the training set of the 4-d experiment for different strategies and metrics, 25 runs each and $\frac{l}{r} = 1$.}
    \label{tab:4dpts}
\end{table}

\begin{figure}[H]
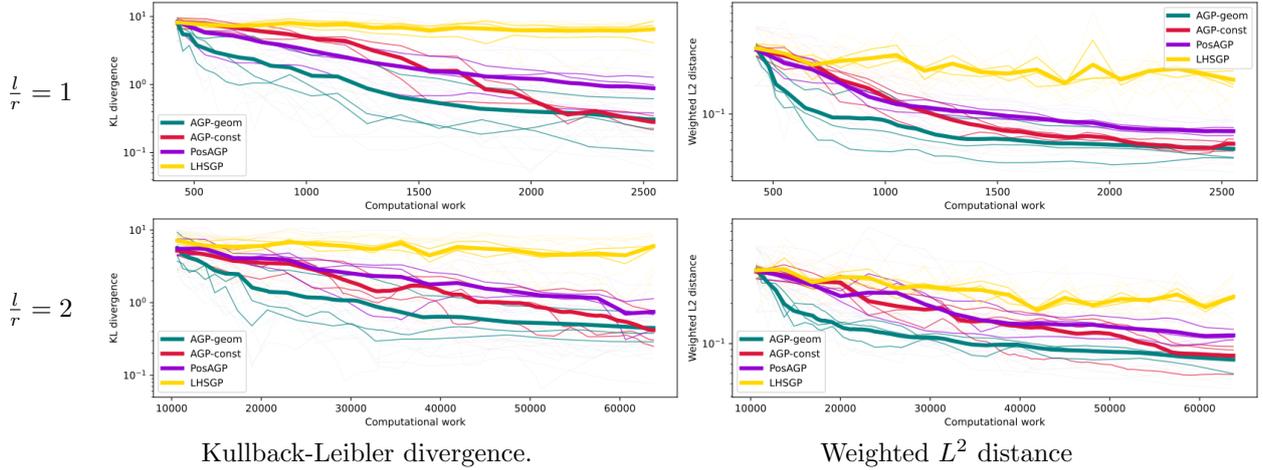

    \hspace{-1cm}
     \begin{minipage}[b]{0.1\textwidth}
     \centering
     $ \frac{l}{r}= 1 $ \\
     \vspace{2.4cm}
     $ \frac{l}{r}= 2 $ \\
     \vspace{1.8cm}
   \end{minipage}
   \begin{minipage}[b]{0.45\textwidth}
    \includegraphics[width=1.1\textwidth]{dKL1.0.png}
    \includegraphics[width=1.1\textwidth]{dKL2.0.png}
    \centering
    Kullback-Leibler divergence.
  \end{minipage}
   \hfill
   \begin{minipage}[b]{0.45\textwidth}
    \includegraphics[width=1.1\textwidth]{L21.0.png}
    \includegraphics[width=1.1\textwidth]{L22.0.png}
     \centering
     Weighted $L^2$ distance
   \end{minipage}
 
   \caption{Convergence rates for the two different error models and two different FE simulation costs on a 4-d parameter space.}\label{fig:4d-convergence}
 \end{figure}

\section{Conclusions}
By interleaving the selection of training points with the accumulation of samples of the posterior, the proposed methodology provides a computationally efficient solution strategy for inverse problems. As demonstrated by the provided experiments, the optimization of the evaluation tolerances in addition to adaptive training point selection can significantly improve the performance of surrogate-based sampling whenever the cost of a more accurate simulation scales accordingly. Moreover, re-optimizing the accuracy of the training data further reduces computational costs by including fewer points in the training set. 

\paragraph{Acknowledgements.} This work has been funded by the German Research Foundation (DFG) under grant 501811638 within the project C07 of the priority program SPP 2388 "Hundert plus – Verlängerung der Lebensdauer komplexer Baustrukturen durch intelligente Digitalisierung".

\bibliographystyle{plain}
\bibliography{main}

\appendix
\section{Target function derivatives} \label{app:derivatives}
This appendix contains the analytical expression of the derivatives contained in the acquisition function~\eqref{eq:acquisition} and used for the gradients of the target function in Problem~\eqref{eq:tolerance-prob}.

\subsection{Derivative of the error model with respect to the variance prediction.}

Both the error models~\eqref{eq:dKL-model},\eqref{eq:L2-model} involve the function $\bar \psi$~\eqref{eq:acquisition} defined at the end of Section~\ref{sec:dKL}. Its derivative with respect to one variance component $\Gamma_{i,j}(p)$ is \[
\frac{\partial \bar \psi(p)}{\partial \Gamma_{i,j}(p)} = \left(  1 + \frac{\|y^m-\bar y(p)\|_{\Sigma^{-2}}}{ 2\sqrt{\trace\left(\Sigma^{-2}\Gamma(p) \right)} } \right) \left(\Sigma^{-2}\right)_{i,j}.
\] 
Thus, for the Kullback-Leibler error model the Jacobian is given by \[
    \frac{\partial e^{KL}_{\mc D_{\tau_p}}(p)}{\partial \Gamma(p)} = \exp(\bar\psi(p)) \left(  1 + \frac{\|y^m-\bar y(p)\|_{\Sigma^{-2}}}{ 2\sqrt{\trace\left(\Sigma^{-2}\Gamma(p) \right)} } \right) (1 + \bar\psi(p) ) \Sigma^{-2}
\]
and for the $L^2$ metric error model by 
\[
    \frac{\partial e^{L^2}_{\mc D_{\tau_p}}(p)}{\partial \Gamma(p)} = \exp(\bar\psi(p)) I_m + \trace(\Gamma(p)) \exp(\bar\psi(p)) \left(  1 + \frac{\|y^m-\bar y(p)\|_{\Sigma^{-2}}}{ 2\sqrt{\trace\left(\Sigma^{-2}\Gamma(p) \right)} } \right) \Sigma^{-2}.
\]
\subsection{Derivative of the variance prediction with respect to the evaluation tolerance.}

We consider a design $\left( (p_i,\tau_i)\right)_{i=1,\dots,s}$ and a GP kernel $k$. Using the explicit formula for the predictive variance given in~\cite[equation 2.24]{RasmussenWilliams2006}, the derivative of $\Gamma(p)$ with respect to $\tau_i$ is given by \[
    \frac{\partial \Gamma(p)}{\partial \tau_i} = 2\tau_i K_* (K + \mathfrak{T} ^2  )^{-1} (I_m \otimes \text{diag}(e_i))(K + \mathfrak{T} ^2  )^{-1} K_*^T,
\] 
where $\underline{e_1}, \dots,\underline{e_s}$ is the canonical basis of $\R^s$, and the kernel matrices are\[
K = \begin{bmatrix} k(p_1,p_1) & \dots & k(p_1,p_s) \\
    \vdots               & \ddots & \vdots \\
    k(p_s,p_1) & \dots & k(p_s,p_s)
\end{bmatrix} \ \text{ and } \
K_* =  \begin{bmatrix}
    k(p_1,p) \\
    \vdots \\
    k(p_s,p)
\end{bmatrix}.
\]

\subsection{Derivative of the evaluation tolerance with respect to the computational work.}
The derivative is obtained by considering the work model~\eqref{eq:work-model} and inverting it:
\[
\frac{\partial \tau}{\partial W} = -\frac{r}{l} W^{-\frac{r}{l}-1}.
\]

\end{document}